\newcommand{\circled}[1]{\small{\raisebox{.6pt}{\textcircled{\raisebox{-.8pt}{#1}}}}}
\def\prox{\textup {prox}}
\def\dom{\textup {dom}}
\def\tpartial{\tilde \partial}
\newcounter{optproblem}
\newtheoremstyle{mytheoremstyle} 
    {\topsep}                    
    {\topsep}                    
    {\normalfont}                
    {}                           
    {\bfseries}                   
    {.}                          
    {.5em}                       
    {}  
\theoremstyle{mytheoremstyle}
\newtheorem{theorem}{Theorem}[section]
\newtheorem{remark}[theorem]{Remark}
\newtheorem{example}[theorem]{Example}
\newtheorem*{theorem*}{Theorem}
\newtheorem*{lemma*}{Lemma}
\newtheorem*{remark*}{Remark}
\newtheorem{lemma}[theorem]{Lemma}
\newtheorem{assumption}{Assumption}
\theoremstyle{mytheoremstyle}
\newtheorem{definition}{Definition}[section]
\theoremstyle{remark}
\DeclareMathAlphabet{\pazocal}{OMS}{zplm}{m}{n}
\DeclareMathAlphabet{\mathpzc}{OMS}{pzc}{m}{it}
\setlist[itemize]{leftmargin=*}
\newcommand{\bfm}[1]{\ensuremath{\mathbf{#1}}}
   \def\bA{\bfm A}
   \def\bD{\bfm D}  
\def\be{\bfm e}
\def\bm{\bfm m}     
     \def\NN{\mathbb{N}}
\def\bp{\bfm p}   \def\bP{\bfm P}  
   \def\bQ{\bfm Q}  
     \def\RR{\mathbb{R}}
   \def\bS{\bfm S}  
\def\bu{\bfm u}     
\def\bv{\bfm v}     
\def\bw{\bfm w}     
\def\bx{\bfm x}     
\def\by{\bfm y}     
\def\bz{\bfm z}     
\def\bzero{\bfm 0}
 \def\cL{{\cal  L}}
 \def\cO{{\cal  O}}
 \def\cR{{\cal  R}}
 \def\cU{{\cal  U}}
\def\+#1{\mathcal{#1}}
\def\-#1{\textup{#1}}
\def\set#1{\left\{ #1 \right\}}
\def\pth#1{\left( #1 \right)}
\def\bth#1{\left[ #1 \right]}
\def\abth#1{\left | #1 \right |}
\def\defeq {\coloneqq}
\def \longmid {\,\middle\vert\,}
\newcommand{\La}{\left\langle\kern-0.64ex\left\langle}
\newcommand{\Ra}{\right\rangle\kern-0.64ex\right\rangle}
\def\Norm#1#2{{\left\vert\kern-0.4ex\left\vert\kern-0.4ex\left\vert #1
    \right\vert\kern-0.4ex\right\vert\kern-0.4ex\right\vert}_{#2}}
\def\norm#1#2{{\left\|#1\right\|}_{#2}}
\def\ltwonorm#1{\norm{#1}{2}}
\newcommand{\1}{{\rm 1}\kern-0.25em{\rm I}}
\def\indict#1{{\rm 1}\kern-0.25em{\rm I}_{\set{#1}}}
\def \eps  {\epsilon}
\def \eps {\varepsilon}
\def \iprod#1#2{\left\langle #1, #2 \right\rangle}
\def\set#1{\left\{#1\right\}}
\newcommand{\argmin}{\textup{argmin}}
\def \dom {\textbf{dom}}
\def \prox {\mathsf{prox}}
\newcommand{\beq}{\begin{equation}}
\newcommand{\eeq}{\end{equation}}
\newcommand{\beqa}{\begin{eqnarray}}
\newcommand{\eeqa}{\end{eqnarray}}
\newcommand{\beqas}{\begin{eqnarray*}}
\newcommand{\eeqas}{\end{eqnarray*}}
\def\bal#1\eal{\begin{align}#1\end{align}}
\def\bals#1\eals{\begin{align*}#1\end{align*}}
\def\bsal#1\esal{\begin{small}\begin{align}#1\end{align}\end{small}}
\def\bsals#1\esals{\begin{small}\begin{align*}#1\end{align*}\end{small}}
\def\bsfal#1\esfal{\begin{small}\begin{flalign}#1\end{flalign}\end{small}}
\title{Projective Proximal Gradient Descent for A Class of Nonconvex Nonsmooth Optimization Problems: Fast Convergence Without Kurdyka-\L{}ojasiewicz (K\L{}) Property}
\author{Yingzhen Yang \\
School of Computing and Augmented Intelligence\\
Arizona State University, Tempe, AZ 85281, USA \\
\texttt{yingzhen.yang@asu.edu}
\And
Ping Li \\
LinkedIn Ads\\
700 Bellevue Way NE, Bellevue, WA 98004, USA \\
\texttt{pinli@linkedin.com}
}
\begin{document}

\maketitle

\begin{abstract}
Nonconvex and nonsmooth optimization problems are important and challenging for statistics and machine learning. In this paper, we propose Projected Proximal Gradient Descent (PPGD) which solves a class of nonconvex and nonsmooth optimization problems, where the nonconvexity and nonsmoothness come from a nonsmooth regularization term which is nonconvex but piecewise convex. In contrast with existing convergence analysis of accelerated PGD methods for nonconvex and nonsmooth problems based on the Kurdyka-\L{}ojasiewicz (K\L{}) property, we provide a new theoretical analysis showing local fast convergence of PPGD. It is proved that PPGD achieves a fast convergence rate of $\cO(1/k^2)$ when the iteration number $k \ge k_0$ for a finite $k_0$ on a class of nonconvex and nonsmooth problems under mild assumptions, which is locally Nesterov's optimal convergence rate of first-order methods on smooth and convex objective function with Lipschitz continuous gradient. Experimental results demonstrate the effectiveness of PPGD.
\end{abstract}

\section{INTRODUCTION}
Nonconvex and nonsmooth optimization problems are challenging ones which have received a lot of attention in statistics and machine learning~\citep{bolte2014proximal,ochs2015iteratively}. In this paper, we consider fast optimization algorithms for a class of nonconvex and nonsmooth problems presented as
\bal\label{eq:piecewise-convex-objective}
&{\min}_{\bx \in \RR^d} F(\bx) = g(\bx) + h(\bx),
\eal%
where $g$ is convex, $h(\bx) = \sum\limits_{j=1}^d h_j(\bx_j)$ is a separable regularizer, each $h_j$ is piecewise convex. A  piecewise convex function is defined in Definition~\ref{definition::piecewise-convex}. For simplicity of analysis we let $h_j = f$ for all $j \in [d]$, and $f$ is a piecewise convex function. Here $[d]$ is the set of natural numbers between $1$ and $n$ inclusively. $f$ can be either nonconvex or convex, and all the results in this paper can be straightforwardly extended to the case when $\set{h_j}$ are different.

\begin{definition}\label{definition::piecewise-convex}
A univariate function $f \colon \RR \to \RR$ is piecewise convex if $f$ is lower semicontinuous and there exist intervals $\set{\cR_m}_{m=1}^M$ such that $\RR = \bigcup_{m=1}^M \cR_m$, and $f$ restricted on $\cR_m$ is convex for each $m \in [M]$. The left and right endpoints of $\cR_m$ are denoted by $q_{m-1}$ and $q_m$ for all $m \in [M]$, where $\set{q_m}_{m=0}^{M}$ are the endpoints such that $q_0 = -\infty \le q_{1} < q_2 < \ldots < q_{M} = +\infty$. Furthermore, $f$ is either left continuous or right continuous at each endpoint $q_m$ for $m \in [M-1]$. $\set{\cR_m}_{m=1}^M$ are also referred to as convex pieces throughout this paper.
\end{definition}
It is important to note that for all $m \in [M-1]$, when $f$ is continuous at the endpoint $q_m$ or $f$ is only left continuous at $q_m$,  $q_m \in \cR_m$ and  $q_m \notin \cR_{m+1}$. If $f$ is only right continuous at $q_m$, $q_m \notin \cR_m$ and  $q_m \in \cR_{m+1}$. This ensures that any point in $\RR$ lies in only one convex piece.

When $M=1$, $f$ becomes a convex function on $\RR$, and problem (\ref{eq:piecewise-convex-objective}) is a convex  problem. We consider $M > 1$ throughout this paper, and our proposed algorithm trivially extends to the case when $M=1$. We allow a special case that an interval $\cR_m = \set{q_{m}}$ for $m \in [M-1]$ is a single-point set, in this case $q_{m-1} = q_m$. When there are no single-point intervals in $\set{\cR_m}_{m=1}^M$, the minimum interval length is defined by $R_0 = \min_{m \in [M]} \abth{\cR_m}$. Otherwise,the minimum interval length is $R_0 = \min_{m \in [M] \colon \abth{\cR_m} \neq 0} \abth{\cR_m}$, where $\abth{\cdot}$ denotes the length of an interval.

It is worthwhile to emphasize that the nonconvexity and nonsmoothness
of many popular optimization problems come from piecewise convex regularizers.
Below are three examples of piecewise convex functions with the corresponding regularizers which have been widely used in constrained optimization and sparse learning problems.

\begin{figure}[!htb]
\begin{center}
\includegraphics[width=.8\textwidth]{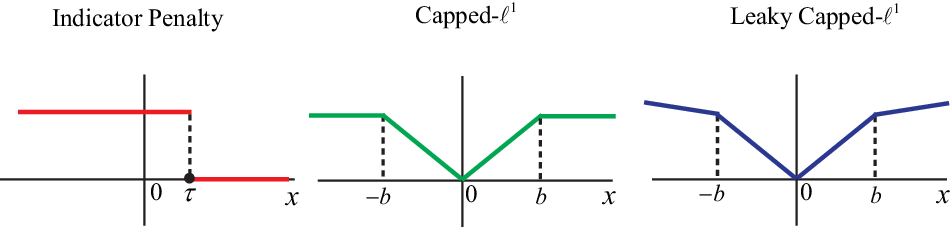}
\end{center}
\caption{Illustration of three piecewise convex functions.}
\label{fig:piecewise-convex-examples}
\end{figure}

\begin{example}\label{example::piecewise-convex}
(1) The indicator penalty function  $f(x) = \lambda \indict{x < \tau}$ is piecewise convex with $\cR_1 = (-\infty,\tau), \cR_2 = [\tau,\infty)$. (2) The capped-$\ell^1$ penalty function $f(x) = f(x;\lambda,b) = \lambda \min\set{\abth{x},b}$  is piecewise convex with $\cR_1 = (-\infty,-b], \cR_2 = [-b,b], \cR_3 = [b,\infty)$. (3) The leaky capped-$\ell^1$ penalty function~\citep{wangni2017learning}
$f=\lambda \min\set{\abth{x},b} + \beta \indict{\abth{x}\ge b} \abth{x-b} $ with $\cR_1 = (-\infty,-b], \cR_2 = [-b,b], \cR_3 = [b,\infty)$. The three functions are illustrated in Figure~\ref{fig:piecewise-convex-examples}. While not illustrated, $f(x) = \indict{x \neq 0}$ for the $\ell^0$-norm with $h(\bx) = \norm{\bx}{0}$ is also piecewise convex.
\end{example}

\subsection{Main Assumption}

The main assumption of this paper is that $g$ and $h$ satisfy the following conditions.

\begin{assumption}[Main Assumption]
\label{assumption:main}

\begin{itemize}[leftmargin=16pt]
\item[(a)] $g$ is convex with $L_g$-smooth gradient, that is, $\ltwonorm{\nabla g(\bx) - \nabla g(\by)} \le L_g \ltwonorm{\bx-\by}$. $F$ is coercive, that is, $F(\bx) \to \infty$ when $\ltwonorm{\bx} \to \infty$, and $\inf_{x \in \RR^d} F(x) > -\infty$.

\item[(b)] $f \colon \RR  \to \RR$ is a piecewise convex function and lower semicontinuous. Furthermore, there exists a small positive constant $s_0 < R_0$ such that $f$ is differentiable on $(q_m-s_0,q_m)$ and $(q_m,q_m+s_0)$ for all $m \in [M-1]$.
\item[(c)] The proximal mapping $\prox_{s f_m}$ has a closed-form solution for all $m \in [M]$, where $\prox_{s f_m} (x) \defeq \argmin_{v \in \RR} \frac{1}{2s}\pth{v - x}^2 + f_m(v)$.
\item[(d)] $f$ has ``negative curvature'' at each endpoint $q_m$ where $f$ is continuous for all $m \in [M-1]$, that is, $\lim_{x \to q_m^-} f'(x) > \lim_{x \to q_m^+} f'(x)$. We define
\bsal\label{def:C-negative-curvature}
C \defeq \min_{m \in [M-1] \colon f \textup{ continuous at } q_m }\set{\lim_{x \to q_m^-} f'(x) - \lim_{x \to q_m^+} f'(x)} > 0.
\esal
In addition, $f$ has bounded Fr\'echet subdifferential, that is, $\sup_{x \in \bigcup_{m=1}^M \cR_m^o}\sup_{v \in \tpartial f(x)} \ltwonorm{v} \le F_0$ for some absolute constant $F_0 > 0$, where $\cR^o$ denotes the interior of an interval.

\end{itemize}
\end{assumption}

Fr\'echet subdifferential is formally defined in Section~\ref{sec:analysis}. It is noted that on each $\cR^o_m$, the convex differential of $f$ coincides with its Fr\'echet subdifferential. We define the minimum jump value of $f$ at noncontinuous endpoints by

\scalebox{0.96}{\parbox{1\linewidth}{%
\bsal\label{eq:jump-value}
J \defeq \min\set{ \min_{\substack{m \in [M-1] \colon \\ f \textup{ is only right continuous at } q_m } }\set{\abth{\lim_{y \to q_m^-} f(y) - f(q_m)}}, \min_{\substack{m \in [M-1] \colon \\ f \textup{ is only left continuous at } q_m} }\set{\abth{\lim_{y \to q_m^+} f(y) - f(q_m)}}}.
\esal
}}

Assumption~\ref{assumption:main} is mild and explained as follows. Smooth gradient in Assumption~\ref{assumption:main}(a) is a commonly required in the standard analysis of proximal methods for non-convex problems, such as~\citet{bolte2014proximal,ghadimi2016accelerated}. The objective $F$ is widely assumed to be coercive in the nonconvex and nonsmooth optimization literature, such as~\citet{li2015accelerated,li2017convergence}. In addition, Assumption~\ref{assumption:main}(b)-(d)
are mild, and they hold for all the three piecewise convex functions in Example~\ref{example::piecewise-convex}
as well as the $\ell^0$-norm.

It is noted that (\ref{eq:piecewise-convex-objective}) covers a broad range of optimization problems in machine learning and statistics. The nonnegative programming problem $\min_{\bx \in \RR^d: \bx_i \ge 0, i \in [d]} g(\bx)$ can be reduced to the regularized problem $g(\bx) + h(\bx)$ with $f$ being the indicator penalty $f(x) = \lambda \indict{x < \tau}$ for a properly large $\lambda$. When $g$ is the squared loss, that is, $g(\bx) = \ltwonorm{\by - \bD {\bx}}^2$ where $\by \in \RR^n$ and $\bD \in \RR^{n \times d}$ is the design matrix, (\ref{eq:piecewise-convex-objective}) is the well-known regularized sparse linear regression problems with convex or nonconvex regularizers such as the capped-$\ell^1$ or $\ell^0$-norm penalty.

\subsection{Main Results and Contributions}

We propose a novel method termed Projective Proximal Gradient Descent (PPGD),
which extends the existing Accelerated Proximal Gradient descent method (APG)~\citep{beck2009fast,nesterov2013gradient}
to solve problem (\ref{eq:piecewise-convex-objective}). PPGD enjoys fast convergence rate by a novel projection operator and a new negative curvature exploitation procedure. Our main results are summarized below.
\begin{itemize}[leftmargin=*]

\item[1.] Using a novel and carefully designed projection operator and the Negative-Curvature-Exploitation algorithm (Algorithm~\ref{alg:NCE}), PPGD achieves a fast convergence rate for the nonconvex and nonsmooth optimization problem (\ref{eq:piecewise-convex-objective}) which locally matches Nesterov's optimal convergence rate of first-order methods on smooth and convex objective function with Lipschitz continuous gradient. In particular, it is proved that under  two mild assumptions, Assumption~\ref{assumption:main} and Assumption~\ref{assumption:nonvanishing-gradient-continuous-endpoints} to be introduced in Section~\ref{sec:analysis}, there exists a finite $k_0 \ge 1$ such that for all $k > k_0$,
\bal\label{eq:ppgd--convergence-intro}
&F(\bx^{(k)}) - F(\bx^*) \le \cO(\frac{1}{k^2}),
\eal%
where $\bx^*$ is any limit point of $\set{\bx^{(k)}}_{k \ge 1}$ lying on the same convex pieces as $\set{\bx^{(k)}}_{k > k_0}$.
Details are deferred to Theorem~\ref{theorem:ppgd-convergence} in Section~\ref{sec:analysis}.
It should be emphasized that this is the same convergence rate as that of regular APG on convex problems~\citep{beck2009fast,Beck2009-fast-ista}.

\item[2.] Our analysis provides insights into accelerated PGD methods for a class of challenging nonconvex and nonsmooth problems. In contrast to most existing accelerated PGD methods~\citep{li2015accelerated,li2017convergence} which  employ the Kurdyka-\L{}ojasiewicz (K\L{}) property   to analyze the convergence rates, PPGD provides a new perspective of convergence analysis without the K\L{} property while locally matching Nesterov's optimal convergence rate. Our analysis reveals that the objective function makes progress, that is, its value is decreased by a positive amount, when the iterate sequence generated by PPGD transits from one convex piece to another. Such observation opens the door for future research in analyzing convergence rate of accelerated proximal gradient methods without the K\L{} property.
\end{itemize}
It should be emphasized that the conditions in
 Assumption~\ref{assumption:main} can be relaxed while PPGD still enjoys
 the same order of convergence rate. First, the assumption that $f$ is piecewise
 convex can be relaxed to a weaker one to be detailed in Remark~\ref{remark:no-assumption-piecewise}.
Second, the proximal mapping in Assumption~\ref{assumption:main} does not
need to have a closed-form solution. 

Assuming that both $g$ and $h$ satisfy the K\L{} Property defined in
Definition~\ref{def:KL-function} in Section~\ref{sec:KL-function} of the supplementary, the accelerated PGD algorithms in~\citet{li2015accelerated,li2017convergence} have linear convergence rate with $\theta \in [1/2,1)$, and sublinear rate $\cO(k^{-\frac{1}{1-2\theta}})$ with $\theta \in (0,1/2)$, where $\theta$ is the Kurdyka-Lojasiewicz (K\L{}) exponent and both rates are for objective values.

To the best of our knowledge, most existing convergence analysis of accelerated PGD methods for nonconvex and nonsmooth problems, where the nonconvexity and nonsmoothness are due to the regularizer $h$, are based on the K\L{} property. While~\citep{ghadimi2016accelerated} provides analysis of an accelerated PGD method for nonconvex and nonsmooth problems, the nonconvexity comes from the smooth part of the objective function ($g$ in our objective function), so it cannot handle the problems discussed in this paper. Furthermore, the fast PGD method by~\citet{yang2019fast} is restricted to $\ell^0$-regularized problems. As a result, it remains an interesting and important open problem regarding the convergence rate of accelerated PGD methods for  problem (\ref{eq:piecewise-convex-objective}).

Another line of related works focuses on accelerated gradient methods for smooth objective functions. For example,~\citet{jin2018accelerated} proposes perturbed accelerated gradient method which achieves faster convergence by decreasing a quantity named Hamiltonian, which is the sum of the current objective value and the scaled squared distance between consecutive iterates, by exploiting the negative curvature. The very recent work~\citet{li2022restarted} further removes the polylogarithmic factor in the time complexity of~\citet{jin2018accelerated} by a restarting strategy.

Because our objective is nonsmooth, the Hamiltonian cannot be decreased by exploiting the negative curvature of the objective in the same way as~\citet{jin2018accelerated}. However, we still manage to design a ``Negative-Curvature-Exploitation'' algorithm which decreases the objective value by an absolute positive amount when the iterates of PPGD cross endpoints.
Our results are among the very few results in the optimization literature about fast optimization methods for nonconvex and nonsmooth problems which are not limited to $\ell^0$-regularized problems.

\subsection{Notations}
Throughout this paper, we use bold letters for matrices and vectors, regular lower letters for scalars. The bold letter with subscript indicates the corresponding element of a matrix or vector. $\norm{\cdot}{p}$ denotes the $\ell_{p}$-norm of a vector, or the $p$-norm of a matrix.  $|\bA|$ denotes the cardinality of a set $\bA$. When $\cR \subseteq \RR$ is an interval, $\overline{\cR}$ is the closure of $\cR$, $\cR^+$ denotes the set of all the points in $\RR$ to the right of $\cR$ while not in $\cR$, and $\cR^-$ is defined similarly denoting the set of all the points to the left of $\cR$. The domain of any function $u$ is denoted by $\dom u$. $\lim_{x \to a^+}$ and $\lim_{x \to a^-}$ denote left limit and right limit at a point $a \in \RR$. $\NN$ is the set of all natural numbers.

\section{Roadmap to Fast Convergence by PPGD}
Two essential components of PPGD contribute to its fast convergence rate. The first component, a combination of a carefully designed Negative-Curvature-Exploitation algorithm and a new projection operator, decreases the objective function by a positive amount when the iterates generated by PPGD transit from one convex piece to another. As a result, there can be only a finite number of such transitions. After finite iterations, all iterates must stay on the same convex pieces. Restricted on these convex pieces, problem (\ref{eq:piecewise-convex-objective})
is convex.
The second component, which comprises $M$ surrogate functions, naturally enables that after iterates reach their final convex pieces, they are operated in the same way as a regular APG does so that the convergence rate of PPGD locally matches Nesterov's optimal rate achieved by APG. Restricted on each convex piece, the piecewise convex function $f$ is convex. Every surrogate function is designed to be an extension of this restricted function to the entire $\RR$, and PPGD performs proximal mapping only on the surrogate functions.

\section{ALGORITHMS}
Before presenting the proposed algorithm, we first define proximal mapping, and then describe how to build surrogate functions for each convex piece.
\begin{definition}[Proximal Mapping]
The proximal mapping associated with function $u$ is defined as $\prox_{u}(\bx) \defeq \argmin_{\bv \in \RR^d} {u}(\bv) + \frac{1}{2} \ltwonorm{\bv - \bx}^2$ for $\bx \in \RR^d$, and $\prox_{u}(\cdot)$ is called the proximal mapping associated with $u$.
\end{definition}
\subsection{Construction of the Surrogate Functions $\set{f_m}_{m=1}^M$}

Given that $f$ is convex on each convex piece $\cR_m$, we describe how to construct a surrogate function $f_m \colon \RR \to \RR$ such that $f_m(x) = f(x)$ for all $x \in \cR_m$. The key idea is to extend the domain of $f_m$ from $\cR_m$ to $\RR$ with the simplest structure, that is, $f_m$ is linear outside of $\cR_m$. More concretely, if the right endpoint $q = q_{m}$ is not $+\infty$ and $f$ is continuous at $q$, then $f_m$ extends $f \vert_{\cR_m}$ such that $f_m$ on $(q,+\infty)$ is linear. Similar extension applies to the case when $q = q_{m-1}$ is the left endpoint of $\cR_m$. Formally, we let
\bsal\label{eq:surrogate-right-endpoint}
f_m(x) = \ \begin{cases}
f(q_m)+ v^- (x-q_m)  & f \textup{ is continuous at } q_m, \\
\lim_{y \to q_m^-} f(y)+v^- (x-q_m) & f(q_m) < \lim_{y \to q_m^-} f(y) \\
\lim_{y \to q_m^+} f(y) &f(q_m) < \lim_{y \to q_m^+} f(y) ,
\end{cases}
\esal
for all $x \in \cR_m^+$ if $q_m \neq +\infty$, where $v^- =\lim_{x \to q_m^-} f'(x) $.  The surrogate function $f_m$ is defined similarly on $\cR_m^-$ by
\bsal\label{eq:surrogate-left-endpoint}
f_m(x) = \begin{cases}
f(q_{m-1})+ v^+ (x-q_{m-1})  & f \textup{ is continuous at } q_{m-1},
 \\
\lim_{y \to q_{m-1}^+} f(y)+v^+ (x-q_{m-1}) & f(q_{m-1}) < \lim_{y \to q_{m-1}^+} f(y) \\
\lim_{y \to q_{m-1}^-} f(y) &f(q_{m-1}) < \lim_{y \to q_{m-1}^-} f(y),
\end{cases}
\esal
for all $x \in \cR_m^-$ if $q_{m-1} \neq -\infty$, where $v^+ = \lim_{x \to q_{m-1}^+} f'(x)$.
Figure~\ref{fig:surrogate-functions} illustrates the surrogate function $f_m(x)$
with $x \in \cR_m^+$ for the three different cases in (\ref{eq:surrogate-right-endpoint}).

\begin{figure}[!htb]
\begin{center}
\includegraphics[width=1\textwidth]{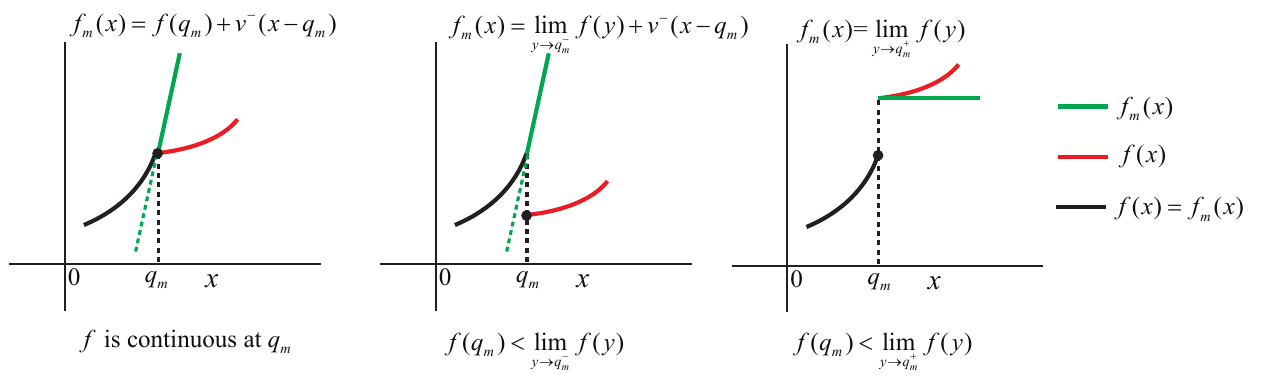}
\end{center}
\caption{Illustration of three piecewise convex functions.}
\label{fig:surrogate-functions}
\end{figure}

Example~\ref{example:surrogate-functions} explains how to build surrogate functions for two popular piecewise convex functions in the optimization literature. It can be observed from the definition of surrogate functions and Figure~\ref{fig:surrogate-functions} that surrogate functions always have a simpler geometric structure than the original $f$, so it is usually the case that proximal mapping associated with the surrogate functions have closed-form expressions, given that proximal mapping associated with $f$ has a closed-form solution. The proximal mappings associated with the surrogate functions are also given in Example~\ref{example:surrogate-functions}.

\begin{example}[Examples of Surrogate Functions]
\label{example:surrogate-functions}
(1) The capped-$\ell^1$ penalty function $f(x) = \lambda \min\set{\abth{x},b}$
has three surrogate functions as $f_1 = f_3 \equiv \lambda b$, and $f_2 = \lambda \abth{x}$. The proximal mappings for all $\set{f_i}_{i=1}^3$ has closed-form expressions, with $\prox_{s f_1} (x) = \prox_{s f_3} (x) = x$, and $\prox_{s f_2} (x) = \pth{1-\lambda s/\abth{x}}_{+}x$ being the soft thresholding operator, where $(a)_+ = a$ when $a \ge 0$ and $(a)_+ = 0$ when $a < 0$.
(2) The indicator penalty function  $f(x) = \lambda \indict{x < \tau}$ has surrogate functions $f_1 \equiv \lambda$,  $f_2 = f$. The proximal mappings are  $\prox_{s f_1} (x) = x$, $\prox_{s f_2} (x) = \tau$ when $\tau - \sqrt{2\lambda s} < x < \tau$, and $\prox_{s f_2} (x) = x$ otherwise except for the point $x=\tau - \sqrt{2\lambda s}$. While $\prox_{s f_2} (x)$ has two values at $x=\tau - \sqrt{2\lambda s}$, it will not be evaluated at $x=\tau - \sqrt{2\lambda s}$ in our PPGD algorithm to be introduced.
\end{example}

\subsection{Projective Proximal Gradient Descent}
\begin{algorithm}[!hbt]
\renewcommand{\algorithmicrequire}{\textbf{Input:}}
\renewcommand\algorithmicensure {\textbf{Output:} }
\caption{Projected Proximal Gradient Descent $\pth{\bx^{(0)}}$}
\label{alg:ppgd}
\begin{algorithmic}[1]
\State $\bz^{(1)} = \bx^{(1)} = \bx^{(0)}$, $t_0 = 0$, $t_1 = 1$, endpoint $\set{q_m}_{m=1}^{M-1}$, step size $s$, constant $w_0 \in (0,1]$.

\State \textbf{\bf for } $k=1,\ldots,$ \,\,\textbf{\bf do }
\bsal
&\bu^{(k)} = \bx^{(k)} + \frac{t_{k-1}}{t_k}(\bz^{(k)} - \bx^{(k)}) + \frac{t_{k-1}-1}{t_k}(\bx^{(k)} - \bx^{(k-1)}) \label{eq:uk-monotone} \\
&\bw^{(k)} = \bP_{\bx^{(k)},R_0}(\bu^{(k)}) \label{eq:wk-monotone} \\
&\textup{\bf for } i=1,\ldots,d, \,\,\textup{\bf do} \nonumber \\
&\quad \bz_i^{(k+1)} = \prox_{s f_{P(\bx_i^{(k)})}} \pth{\bth{\bw^{(k)} - s \nabla g(\bw^{(k)})}_i} \label{eq:zk-monotone} \\
&t_{k+1} = \frac{\sqrt{1+4t_k^2}+1}{2}. \nonumber \label{eq:tk-monotone} \\
&\textup{\bf if } F_{P(\bx^{(k)})}(\bz^{(k+1)}) \le F(\bx^{(k)}) \,\,\textup{\bf  then } \nonumber \\
&\quad \bx^{(k+1)} = \textup{Negative-Curvature-Exploitation} \pth{\bx^{(k)}, \bz^{(k+1)},w_0} \\
&\textup{\bf else } \nonumber \\
&\quad \bx^{(k+1)} = \bx^{(k)} \nonumber
\esal%
\end{algorithmic}
\end{algorithm}
We now define a basic operator $P$ which returns the index of the convex piece that the input lies in, that is, $P(x) = m$ for $x \in \cR_m$. It is noted that $P(x)$ is a single-valued function, so $P(x) \neq P(y)$ indicates $x,y$ are on different convex pieces, that is, there is no convex piece which contain both $x$ and $y$. Given a vector $\bx \in \RR^d$, we define $P(\bx) \in \RR^d$ such that $[P(\bx)]_i = P(\bx_i)$ for all $i \in [d]$. We then introduce a novel projection operator $\bP_{\bx,R_0}(\cdot)$ for any $\bx \in \RR^d$. That is, $\bP_{\bx,R_0} \colon \RR^d \to \RR^d$, and $\bP_{\bx,R_0}(\bu)$ for all $\bu \in \RR^d$ is defined as
\bsal\label{eq:convex-piece-projection}
\bth{\bP_{\bx,R_0}(\bu)}_i \defeq
\argmin_{x \in \overline {\cR_{P(\bx_i)}} \cap B(\bx_i,R_0) } \abth{x-\bu_i}
\esal
for all $i \in [d]$, where $B(\bx_i,R_0) \defeq [\bx_i- R_0, \bx_i + R_0]$. Since $\overline {\cR_{P(\bx_i)}} \cap B(\bx_i,R_0)$ is a closed convex set, (\ref{eq:convex-piece-projection}) is the projection of $\bu_i$ onto the closed convex set $\overline {\cR_{P(\bx_i)}} \cap B(\bx_i,R_0)$ which is well-defined. $\bP_{\bx,R_0}(\bu)$ can be computed very efficiently by setting the endpoint of $\cR_{P(\bx_i)} \cap B(\bx_i,R_0)$ closer to $\bu_i$ to $\bth{\bP_{\bx,R_0}(\bu)}_i$.
\begin{algorithm}[!hbt]
\renewcommand{\algorithmicrequire}{\textbf{Input:}}
\renewcommand\algorithmicensure {\textbf{Output:} }
\caption{Negative-Curvature-Exploitation $\pth{\bx^{(k)}, \bz^{(k+1)},w_0}$}
\label{alg:NCE}
\begin{algorithmic}[1]
\State \textup {\bf if} $P(\bz^{(k+1)}) = P(\bx^{(k)})$ \,\,\textup{\bf  then }
\State \quad $\bx^{(k+1)} = \bz^{(k+1)}$
\State \quad \textup {\bf return} $\bx^{(k+1)}$
\State Flag = false
\State $\bz' = \bz^{(k+1)}$
\State \textup{\bf for} $i=1,\ldots,d$  \,\,\textup{\bf do} \\
\quad \textup {\bf if} $P(\bz_i^{(k+1)}) \neq P(\bx_i^{(k)}) = m_i$ \,\,\textup{\bf  then } \\
\quad \quad \textup {\bf if} $f$ \textup{ is continuous at } $q(\bw_i^{(k)})$
and $d_{i,1} \ge \ w_0 d_{i,0}$ \,\,\textup{\bf  then } \\
\quad \quad \quad Flag = true \\
\quad \quad \textup {\bf if} $f$ \textup{ is not continuous at } $q(\bw_i^{(k)})$ \,\,\textup{\bf  then } \\
\quad \quad \quad Flag = true
\State \quad \quad \quad \textup{\bf if } $\cR_{P(\bz_i^{(k+1)})} = \set{q(\bw_i^{(k)})}$  \,\,\textup{\bf  then }
\State \quad \quad \quad \quad $\bz'_i = q(\bw_i^{(k)})$
\State \textup {\bf if} Flag $=$ false  \,\,\textup{\bf  then }
\State \quad $\bx^{(k+1)} = \bx^{(k)}$
\State \textup {\bf else}
\State \quad $\bx^{(k+1)} = \bz'$

\State \textup {\bf return} $\bx^{(k+1)}$
\end{algorithmic}
\end{algorithm}

The Projected Proximal Gradient Descent (PPGD) algorithm is described in Algorithm~\ref{alg:ppgd}. The following functions and quantities are defined which are useful for PPGD. Let $q(\bw_i^{(k)})$ be the endpoint closest to $\bw_i^{(k)}$ which lies in
$[\bw_i^{(k)}, \bz_i^{(k+1)}]$ or $[\bz_i^{(k+1)}, \bw_i^{(k)}]$ when $P(\bz_i^{(k+1)}) \neq P(\bx_i^{(k)})$ at iteration $k$ of Algorithm~\ref{alg:ppgd}. We define $F_{P(\bx)}(\bv) \defeq g(\bv)+\sum_{i=1}^d f_{P(\bx_i)}(\bv_i)$, $d_{i,0} \defeq \abth{\bz_i^{(k+1)} - \bw_i^{(k)}}$, $d_{i,1} \defeq \abth{\bz_i^{(k+1)} - q(\bw_i^{(k)})}$.


The idea of PPGD is to use $\bz^{(k+1)}$ as a probe for the next convex pieces
that the current iterate $\bx^{(k)}$ should transit to.
Compared to the regular APG described in Algorithm~\ref{alg:apg-monotone} in Section~\ref{sec:pgd-apg} of the supplementary, the projection of $\bu^{(k)}$ onto a closed convex set is used to compute $\bz^{(k+1)}$.
This is to make sure that $\bu^{(k)}$ is ``dragged'' back to the convex pieces of $\bx^{(k)}$, and the only variable that can explore new convex pieces is $\bz^{(k+1)}$. We have a novel Negative-Curvature-Exploitation (NCE) algorithm described in Algorithm~\ref{alg:NCE} which decides if PPGD should update the next iterate $\bx^{(k+1)}$. In particular, if $\bz^{(k+1)}$ is on the same convex pieces as $\bx^{(k)}$, then $\bx^{(k+1)}$
is updated to $\bz^{(k+1)}$ only if $\bz^{(k+1)}$ has smaller objective value.
Otherwise, the NCE algorithm carefully checks if any one of the two sufficient conditions can be met so that the objective value can be decreased. If this is the case, then $\textup{Flag}$ is set to True and $\bx^{(k+1)}$ is updated by $\bz^{(k+1)}$, which indicates that $\bx^{(k+1)}$ transits to convex pieces different from that of $\bx^{(k)}$. Otherwise, $\bx^{(k+1)}$ is set to the previous value $\bx^{(k)}$. The two sufficient conditions are checked in line $8$ and $10$ of Algorithm~\ref{alg:NCE}.
Such properties of NCE along with the convergence of PPGD will be proved  in the next section.

\section{Convergence Analysis}
\label{sec:analysis}

In this section, we present the analysis for the convergence rate of PPGD in Algorithm~\ref{alg:ppgd}. Before that, we present the definition of Fr\'echet subdifferential which is used to define critical points of the objective function.
\begin{definition}\label{def:subdifferential-critical-points}
{\rm (Fr\'echet subdifferential and critical points~\citep{rockafellar2009variational})}
Given a non-convex function $u \colon \RR^d \to \RR \cup \{+\infty\}$ which is a proper and lower semicontinuous function.
\begin{itemize}[leftmargin=*]
\item For a given $\bx \in {\dom}u$, its Fr\'echet subdifferential of $u$ at $\bx$, denoted by $\tpartial u(\bx)$, is
the set of all vectors $\bu \in \RR^d$ which satisfy
\bals
&\liminf\limits_{\by \neq \bx,\by \to \bx, \by \in \dom u } \frac{u(\by)-u(\bx)-\langle \bu, \by-\bx \rangle}{\|\by-\bx\|} \ge 0.
\eals%
\item The limiting subdifferential of $u$ at $\bx \in \RR^d$, denoted by written $\partial u(\bx)$, is defined by
\bals
\partial u(\bx) &= \{\bv \in \RR^d \mid \exists \bx^k \to \bx, u(\bx^k) \to u(\bx), \, \tilde \bv^k \in {\tilde \partial u}(\bx^k) \to \bv\}.
\eals%
\end{itemize}
The point $\bx$ is a critical point of $u$ if $\bzero \in \partial u(\bx)$. 
\end{definition}
The following assumption is useful for our analysis when the convex pieces have one or more endpoints at which $f$ is continuous.

\begin{assumption}[Nonvanishing Gradient at Continuous Endpoints]
\label{assumption:nonvanishing-gradient-continuous-endpoints}
Let $q_m$ with $m \in [M-1]$ be an endpoint at which $f$ is continuous. There exists a positive constant $\eps_0$ such that the following two conditions hold. If $q_m$ is a left endpoint, then $\inf_{\bx \in \RR^d, \bx_i = q_m} \abth{[\nabla g(\bx)]_i + p^+} \ge \eps_0$ for all $i \in [d]$. If  $q_m$ is a right endpoint, then $\inf_{\bx \in \RR^d, \bx_i = q_m} \abth{[\nabla g(\bx)]_i + p^-} \ge \eps_0$. Here $p^+ \defeq \lim_{y \to q_m^+} f'(y)$, and $p^- \defeq \lim_{y \to q_m^-} f'(y)$.
\end{assumption}
We note that Assumption~\ref{assumption:nonvanishing-gradient-continuous-endpoints} is rather mild. For example, when $f(x) = \lambda \min\set{\abth{x},b}$ is the capped-$\ell^1$ penalty, then Assumption~\ref{assumption:nonvanishing-gradient-continuous-endpoints} holds when  $\lambda > G$ and $\eps_0$ can be set to $\lambda - G$. Such requirement for large $\lambda$ is commonly used for analysis of consistency of sparse linear models, such as~\citet{loh2017statistical}. In addition, when there are no endpoints at which $f$ is continuous, then Assumption~\ref{assumption:nonvanishing-gradient-continuous-endpoints} is not required for the provable convergence rate of PPGD. Details are deferred to Remark~\ref{remark:no-assumption-nonvanishing-gradient}.

We then have the important Theorem~\ref{theorem:decrease-convex-pieces-change}, which directly follows from
Lemma~\ref{lemma:decrease-convex-pieces-change} and Lemma~\ref{lemma:bounded-gradient} below. Theorem~\ref{theorem:decrease-convex-pieces-change} states that the Negative-Curvature-Exploitation algorithm guarantees that, if convex pieces change across consecutive iterates $\bx^{(k)}$ and $\bx^{(k+1)}$, then the objective value is decreased by a positive amount. Before presenting these results, we note that the level set $\cL \defeq \overline{\set{\bx \longmid F(\bx) \le F(\bx^{(0)})}}$ is bounded because $F$ is coercive. Since $\nabla g$ is smooth, we define the supremum of $\ltwonorm{\nabla g}$ over an enlarged version of $\cL$ as $G$:
\bal\label{eq:G-bound}
G \defeq \sup\limits_{\bx \in \cL_{R_0}} \ltwonorm{\nabla g(\bx)},
\quad \textup{where }\cL_{R_0} \defeq \set{\bx \in \RR^d \longmid \sup_{\bx' \in \cL} \ltwonorm{\bx-\bx'} \le R_0}.
\eal
$G$ is finite due to the fact that $\cL_{R_0}$ is compact, and it will serve as the upper bound for $\ltwonorm{\nabla g(\bw^{(k)})}$.

\begin{lemma}[Decrease of the objective function when convex pieces change]
\label{lemma:decrease-convex-pieces-change}
Define $A \defeq L_g (G+\sqrt{d} F_0)$,
$\kappa_0 \defeq \min\set{\kappa_1,\kappa_2}$ with $\kappa_1 \defeq s C w_0 \pth{ \eps_0 - sL_g (1-w_0) (G + F_0) }$, $\kappa_2 \defeq J - 2sF_0(G+F_0)$, and $\kappa \defeq \kappa_0 - s \pth{sL_g(G+\sqrt{d} F_0) + G}(G+\sqrt{d} F_0)$. Let the step size satisfy
\scalebox{0.98}{\parbox{1\linewidth}{%
\bsal\label{eq:s-bound}
s < s_1 \defeq \min\set{\frac{s_0}{G+F_0},\frac{\eps_0}{L_g (1-w_0) (G + F_0)},\frac{J}{F_0G+G^2/2},\frac{J}{2F_0(G+F_0)},\frac{-G+\sqrt{G^2 + \frac{4A\kappa_0}{G+\sqrt{d} F_0}}}{2A}},
\esal
}}
and suppose $\ltwonorm{\nabla g(\bw^{(k)})} \le G$.
If $P(\bx^{(k+1)}) \neq P(\bx^{(k)})$ for some $k \ge 1$ in the sequence $\set{\bx^{(k)}}_{k \ge 1}$ generated by the PPGD algorithm, then under
Assumption~\ref{assumption:main} and Assumption~\ref{assumption:nonvanishing-gradient-continuous-endpoints},
\bsal\label{decrease-convex-pieces-change}
F(\bx^{(k+1)}) \le F(\bx^{(k)}) - \kappa.
\esal
\end{lemma}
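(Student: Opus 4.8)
# Proof Proposal for Lemma~\ref{lemma:decrease-convex-pieces-change}

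\textbf{Overall strategy.} The plan is to trace through the Negative-Curvature-Exploitation (NCE) algorithm (Algorithm~\ref{alg:NCE}) case by case, bounding the change in objective value when $\bx^{(k+1)} = \bz'$ (the only case in which $P(\bx^{(k+1)}) \neq P(\bx^{(k)})$ can occur, since when $\textup{Flag}$ is false we set $\bx^{(k+1)} = \bx^{(k)}$). The key idea is to split $F(\bx^{(k+1)}) - F(\bx^{(k)})$ into two contributions: (i) the \emph{gain} from crossing an endpoint where $f$ has negative curvature or a downward jump, which is bounded below by a positive quantity coming from the constant $C$ (continuous endpoints, via Assumption~\ref{assumption:nonvanishing-gradient-continuous-endpoints} and the $\eps_0$ lower bound) or the constant $J$ (discontinuous endpoints); and (ii) the \emph{loss} from the fact that $\bz'$ differs from the surrogate-function proximal update $\bz^{(k+1)}$ starting at $\bw^{(k)}$ rather than at $\bx^{(k)}$, plus the displacement from $\bx^{(k)}$ to $\bw^{(k)}$ introduced by the projection $\bP_{\bx^{(k)},R_0}$. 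The net bound $-\kappa$ is then obtained by showing the gain exceeds the loss once $s$ is small enough, which is precisely what the definition of $s_1$ in~\eqref{eq:s-bound} encodes.

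\textbf{Main steps.} First I would establish the geometric facts: since $\bw^{(k)} = \bP_{\bx^{(k)},R_0}(\bu^{(k)})$ lies in $\overline{\cR_{P(\bx^{(k)})}} \cap B(\bx^{(k)},R_0)$, we have $\ltwonorm{\bw^{(k)} - \bx^{(k)}} \le R_0$, hence $\bw^{(k)} \in \cL_{R_0}$ and $\ltwonorm{\nabla g(\bw^{(k)})} \le G$ (consistent with the hypothesis). Second, for each coordinate $i$ with $P(\bz_i^{(k+1)}) \neq P(\bx_i^{(k)})$, I would analyze the one-dimensional proximal step $\bz_i^{(k+1)} = \prox_{sf_{P(\bx_i^{(k)})}}([\bw^{(k)} - s\nabla g(\bw^{(k)})]_i)$: using the optimality/first-order condition of the prox and the linear structure of the surrogate $f_m$ outside $\cR_m$ (equations~\eqref{eq:surrogate-right-endpoint}--\eqref{eq:surrogate-left-endpoint}), I would lower-bound the distance $d_{i,1} = |\bz_i^{(k+1)} - q(\bw_i^{(k)})|$ crossed past the endpoint, invoking Assumption~\ref{assumption:nonvanishing-gradient-continuous-endpoints} to see that the gradient plus the one-sided slope of $f$ has magnitude $\ge \eps_0$, which forces a genuine crossing of size proportional to $s\eps_0$ (this is where the $w_0$-test $d_{i,1} \ge w_0 d_{i,0}$ and $\kappa_1$ enter). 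Third, I would do a Taylor/descent-lemma expansion of $g$ together with the convexity of $f$ on each relevant piece to write
\bsal\label{eq:proof-sketch-decomposition}
F(\bz') - F(\bx^{(k)}) \le \bigl[g(\bz') - g(\bw^{(k)})\bigr] + \bigl[g(\bw^{(k)}) - g(\bx^{(k)})\bigr] + \sum_{i} \bigl[f_{P(\bx_i^{(k)})}(\bz_i') - f(\bx_i^{(k)})\bigr],
\esal
then bound the $g$-differences by $G$ times displacements plus $L_g$-quadratic terms (each displacement is $O(s(G+F_0))$ or $\le R_0$), and bound the $f$-part below by the negative-curvature gain minus a term controlled by $F_0$. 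Collecting the positive gain ($\kappa_1$ for continuous endpoints, $\kappa_2$ for jumps, so $\kappa_0 = \min\{\kappa_1,\kappa_2\}$) against the quadratic-in-$s$ loss term $s(sL_g(G+\sqrt d F_0)+G)(G+\sqrt d F_0)$ yields exactly $\kappa = \kappa_0 - s(sL_g(G+\sqrt d F_0)+G)(G+\sqrt d F_0) > 0$ under the last entry of the $\min$ defining $s_1$ (which is the positive root of the corresponding quadratic $As^2 + Gs - \kappa_0/(G+\sqrt d F_0) = 0$ with $A = L_g(G+\sqrt d F_0)$).

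\textbf{The main obstacle.} The hardest part will be the careful bookkeeping in the case where the endpoint $q(\bw_i^{(k)})$ is one where $f$ is \emph{continuous}: there the ``gain'' is not a discrete jump but comes only from the kink (the slope drop $\ge C$), so one must argue that the proximal update genuinely lands a definite distance $d_{i,1}$ on the far side of the endpoint — and crucially that this distance is at least a $w_0$-fraction of the full step $d_{i,0}$, so that the NCE test in line~8 actually fires. This requires combining (a) the nonvanishing-gradient Assumption~\ref{assumption:nonvanishing-gradient-continuous-endpoints} to guarantee the prox input is pushed far enough past $q_m$, (b) the $s_0$-differentiability from Assumption~\ref{assumption:main}(b) so the surrogate slopes are well-defined near the endpoint, and (c) the bounded-subdifferential bound $F_0$ to control how far $\bz_i^{(k+1)}$ can be from $\bw_i^{(k)}$. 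Handling the interaction between the projection-induced perturbation $\bw^{(k)} - \bx^{(k)}$ and this crossing estimate — ensuring the perturbation does not destroy the sign of the gain — is the delicate point, and it is the reason the step-size bound $s_1$ has the first component $s_0/(G+F_0)$ (keeping the prox input within the region where $f$ is differentiable) and the component $\eps_0/(L_g(1-w_0)(G+F_0))$ (keeping $\kappa_1 > 0$). The remaining cases (discontinuous endpoints, and single-point convex pieces handled by the $\bz'_i = q(\bw_i^{(k)})$ branch in lines~12--13 of Algorithm~\ref{alg:NCE}) are more routine, relying on the jump value $J$ and the $\kappa_2$ bound, so I would dispatch those after the continuous case is settled.
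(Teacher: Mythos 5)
There is a genuine gap, and it sits exactly at the decomposition you propose in (\ref{eq:proof-sketch-decomposition}). You attempt to compare $F(\bz')$ with $F(\bx^{(k)})$ directly by passing through $\bw^{(k)}$ and bounding the resulting $g$- and $f$-differences by ``$G$ times displacements plus $L_g$-quadratic terms.'' But the displacement $\bw^{(k)}-\bx^{(k)}$ is \emph{not} $O(s)$: $\bu^{(k)}$ contains the momentum terms $\frac{t_{k-1}}{t_k}(\bz^{(k)}-\bx^{(k)})+\frac{t_{k-1}-1}{t_k}(\bx^{(k)}-\bx^{(k-1)})$, and the projection $\bP_{\bx^{(k)},R_0}$ only guarantees $\abth{\bw_i^{(k)}-\bx_i^{(k)}}\le R_0$ per coordinate, so $g(\bw^{(k)})-g(\bx^{(k)})$ can be of order $G\sqrt{d}R_0$, a quantity that does not shrink with $s$ and cannot be absorbed by the gain $\kappa_0=O(s)$. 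In fact the inequality $F_{P(\bx^{(k)})}(\bz^{(k+1)})\le F(\bx^{(k)})$ is simply not provable from displacement estimates alone (momentum can overshoot); it is \emph{enforced by the algorithm}: in Algorithm~\ref{alg:ppgd}, NCE is invoked only when $F_{P(\bx^{(k)})}(\bz^{(k+1)}) \le F(\bx^{(k)})$, and otherwise $\bx^{(k+1)}=\bx^{(k)}$ so the pieces cannot change. Your proposal mentions only the Flag mechanism inside NCE and never uses this acceptance test, which is the missing ingredient.

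The paper's proof therefore has a different (and simpler) structure than yours: it never compares against $F(\bx^{(k)})$ analytically. It partitions the coordinates into those crossing a continuous endpoint, those crossing a discontinuous endpoint, and those not crossing, and proves the purely coordinate-wise surrogate comparisons $f(\bx_i^{(k+1)})\le f_{m_i}(\bz_i^{(k+1)})-\kappa_1$ (continuous case with the $w_0$-test, via Assumption~\ref{assumption:nonvanishing-gradient-continuous-endpoints} and the slope drop $C$), $\le f_{m_i}(\bz_i^{(k+1)})-\kappa_2$ (jump case via $J$), and $\le f_{m_i}(\bz_i^{(k+1)})$ otherwise; NCE guarantees at least one crossing coordinate triggers a $\kappa_1$ or $\kappa_2$ decrease, yielding $F(\bx^{(k+1)})\le F_{P(\bx^{(k)})}(\bz^{(k+1)})-\kappa_0$ when $\bx^{(k+1)}=\bz^{(k+1)}$. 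The quadratic correction $s\pth{sL_g(G+\sqrt{d}F_0)+G}(G+\sqrt{d}F_0)$ in $\kappa$ does not come from the projection displacement $\bw^{(k)}-\bx^{(k)}$, as you suggest, but from the case $\bz'\neq\bz^{(k+1)}$ (coordinates snapped to a single-point piece), where $\ltwonorm{\bx^{(k+1)}-\bz^{(k+1)}}\le\ltwonorm{\bw^{(k)}-\bz^{(k+1)}}\le s(G+\sqrt{d}F_0)$ by the prox optimality, and the mean value theorem bounds $\abth{g(\bx^{(k+1)})-g(\bz^{(k+1)})}$. Chaining this with the algorithmic test $F_{P(\bx^{(k)})}(\bz^{(k+1)})\le F(\bx^{(k)})$ gives the claim; your route, as written, would be attempting to prove an intermediate inequality that is false without that test. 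Your per-coordinate crossing analysis (parts (a)--(c) of your ``main obstacle'' paragraph) is on target and matches the paper's Lemmas on decrease at continuous and discontinuous endpoints, but the global assembly must be rebuilt around the acceptance test rather than around the decomposition (\ref{eq:proof-sketch-decomposition}).
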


\begin{lemma}\label{lemma:bounded-gradient}
The sequences $\set{\bx^{(k)}}_{k \ge 1}$ and $\set{\bw^{(k)}}_{k \ge 1}$ generated by the PPGD algorithm described in Algorithm~\ref{alg:ppgd} satisfy
\bal\label{eq:bounded-gradient}
F(\bx^{(k)}) \le F(\bx^{(k-1)}), \quad \ltwonorm{\nabla g(\bw^{(k)})} \le G, \quad \forall k \ge 1.
\eal
\end{lemma}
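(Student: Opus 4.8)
The plan is to prove the two claims in \eqref{eq:bounded-gradient} simultaneously by induction on $k$, since they are intertwined: the gradient bound $\ltwonorm{\nabla g(\bw^{(k)})} \le G$ is needed to invoke Lemma~\ref{lemma:decrease-convex-pieces-change}, while the monotonicity $F(\bx^{(k)}) \le F(\bx^{(k-1)})$ is what keeps the iterates inside the level set $\cL$ and hence keeps $\bw^{(k)}$ inside the enlarged set $\cL_{R_0}$ where $G$ is the supremum of $\ltwonorm{\nabla g}$. First I would establish the base case: $\bx^{(1)} = \bx^{(0)}$, so $F(\bx^{(1)}) \le F(\bx^{(0)})$ trivially, and $\bx^{(1)} \in \cL$ by definition of the level set.

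For the inductive step, assume $F(\bx^{(j)}) \le F(\bx^{(j-1)})$ for all $j \le k$, so that $\bx^{(k)} \in \cL$. I would first show $\ltwonorm{\nabla g(\bw^{(k)})} \le G$. The key geometric fact is that $\bw^{(k)} = \bP_{\bx^{(k)},R_0}(\bu^{(k)})$ is, by the definition \eqref{eq:convex-piece-projection} of the projection operator, constrained to lie in $\prod_i B(\bx_i^{(k)}, R_0)$, i.e. $\ltwonorm{\bw^{(k)} - \bx^{(k)}} \le \sqrt{d}\,R_0$ coordinatewise $|\bw_i^{(k)} - \bx_i^{(k)}| \le R_0$; more to the point, since each coordinate satisfies $|\bw_i^{(k)} - \bx_i^{(k)}| \le R_0$, we get $\sup_{\bx' \in \cL}\ltwonorm{\bw^{(k)} - \bx'} \le \dots$ — actually the cleaner route is: $\bx^{(k)} \in \cL$ and $\bw^{(k)}$ lies within $\ell_\infty$-distance $R_0$, hence within $\ell_2$-distance $\sqrt d R_0$, of $\bx^{(k)}$; one then needs $\cL_{R_0}$ to be defined with the right radius. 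I would check against \eqref{eq:G-bound} that $\cL_{R_0}$ is exactly the $R_0$-neighborhood of $\cL$ in $\ell_2$, so the argument needs the coordinatewise bound to be upgraded — the honest statement is $\ltwonorm{\bw^{(k)}-\bx^{(k)}} \le \sqrt d R_0$, which would require $\cL_{\sqrt d R_0}$; I would either note the paper intends the $\ell_\infty$-ball interpretation or that each coordinate move being at most $R_0$ suffices under a per-coordinate version of \eqref{eq:G-bound}. Granting $\bw^{(k)} \in \cL_{R_0}$, the gradient bound follows immediately from the definition of $G$.

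Next I would show $F(\bx^{(k+1)}) \le F(\bx^{(k)})$ by inspecting the two branches of Algorithm~\ref{alg:ppgd}. In the \textbf{else} branch, $\bx^{(k+1)} = \bx^{(k)}$ and there is nothing to prove. In the \textbf{if} branch, we have $F_{P(\bx^{(k)})}(\bz^{(k+1)}) \le F(\bx^{(k)})$ and $\bx^{(k+1)} = \textup{Negative-Curvature-Exploitation}(\bx^{(k)}, \bz^{(k+1)}, w_0)$. Here I would split on the NCE output: if NCE returns $\bz^{(k+1)}$ with $P(\bz^{(k+1)}) = P(\bx^{(k)})$ (lines 1--3), then $F(\bx^{(k+1)}) = F_{P(\bx^{(k)})}(\bz^{(k+1)}) \le F(\bx^{(k)})$ since $f_{P(\bx_i^{(k)})}$ agrees with $f$ on $\cR_{P(\bx_i^{(k)})}$ and all coordinates of $\bz^{(k+1)}$ lie on those pieces; if NCE returns $\bx^{(k)}$ (Flag false), nothing to prove; and if NCE returns $\bz'$ with changed convex pieces (Flag true), then this is exactly the hypothesis of Lemma~\ref{lemma:decrease-convex-pieces-change}, whose conclusion $F(\bx^{(k+1)}) \le F(\bx^{(k)}) - \kappa \le F(\bx^{(k)})$ applies — and crucially this invocation is legitimate because we have already established $\ltwonorm{\nabla g(\bw^{(k)})} \le G$ in this inductive step, and the step-size condition \eqref{eq:s-bound} is assumed. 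One subtlety: I must confirm $\kappa > 0$ (or at least $\kappa \ge 0$) under \eqref{eq:s-bound} so that the decrease is genuinely nonincreasing; this is built into the choice of $s_1$.

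The main obstacle I anticipate is the careful bookkeeping in the NCE branch where $\bz'$ differs from $\bz^{(k+1)}$ only in coordinates set to single-point intervals $\set{q(\bw_i^{(k)})}$ (lines 12--13): one must verify that the decrease guaranteed by Lemma~\ref{lemma:decrease-convex-pieces-change} refers to $\bx^{(k+1)} = \bz'$ and not to $\bz^{(k+1)}$, and reconcile the $F_{P(\bx^{(k)})}$-value controlled by the algorithm's \textbf{if}-test with the true $F$-value at the new point $\bz'$, which lives on different convex pieces. This transition from surrogate-function values to true objective values across a change of pieces is precisely what Lemma~\ref{lemma:decrease-convex-pieces-change} is designed to handle, so the plan is to lean on it as a black box; the remaining work is just to confirm that every call to the \textbf{if}-branch with Flag true is covered by that lemma's hypothesis, and to handle the easy cases directly. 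I would also mention that an induction establishing both statements at level $k$ before proceeding to $k+1$ avoids any circularity between the gradient bound and the monotonicity.
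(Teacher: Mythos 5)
Your proposal is correct and follows essentially the same route as the paper: a simultaneous induction in which the base case is trivial because $\bx^{(1)}=\bx^{(0)}$, the gradient bound at step $k$ (from $\bx^{(k)}\in\cL$ plus the projection radius in $\bP_{\bx^{(k)},R_0}$) licenses invoking Lemma~\ref{lemma:decrease-convex-pieces-change} in the piece-changing case, and the remaining branches of Algorithms~\ref{alg:ppgd} and~\ref{alg:NCE} give monotonicity directly; the paper's proof is merely a terser version of this, so your branch-by-branch bookkeeping is a faithful (and more explicit) rendering. The radius subtlety you flag is genuine — the projection only gives $\abth{\bw_i^{(k)}-\bx_i^{(k)}}\le R_0$ per coordinate, hence $\ltwonorm{\bw^{(k)}-\bx^{(k)}}\le\sqrt{d}\,R_0$, so the enlargement in (\ref{eq:G-bound}) should be read per coordinate or taken with radius $\sqrt{d}\,R_0$ — and your proposed repair is exactly what the paper implicitly needs.
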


\begin{theorem}
\label{theorem:decrease-convex-pieces-change}
Suppose the step size $s < s_1$ where $s_1$ is defined in (\ref{eq:s-bound}).
If $P(\bx^{(k+1)}) \neq P(\bx^{(k)})$ for some $k \ge 1$ in the sequence $\set{\bx^{(k)}}_{k \ge 1}$ generated by the PPGD algorithm, then under
Assumption~\ref{assumption:main} and Assumption~\ref{assumption:nonvanishing-gradient-continuous-endpoints},
\bsal\label{decrease-convex-pieces-change}
F(\bx^{(k+1)}) \le F(\bx^{(k)}) - \kappa,
\esal
where $\kappa$ is defined in Lemma~\ref{lemma:decrease-convex-pieces-change}.
\end{theorem}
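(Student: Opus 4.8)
The plan is to derive Theorem~\ref{theorem:decrease-convex-pieces-change} as an immediate consequence of the two preceding lemmas, so the bulk of the work is hidden in Lemma~\ref{lemma:decrease-convex-pieces-change} and Lemma~\ref{lemma:bounded-gradient}, and the theorem itself is a short combination. First I would invoke Lemma~\ref{lemma:bounded-gradient}, which guarantees that for every $k \ge 1$ the iterate $\bw^{(k)}$ generated by PPGD satisfies $\ltwonorm{\nabla g(\bw^{(k)})} \le G$, where $G$ is the bound defined in (\ref{eq:G-bound}). This removes the hypothesis ``$\ltwonorm{\nabla g(\bw^{(k)})} \le G$'' that appears as a standing assumption inside Lemma~\ref{lemma:decrease-convex-pieces-change}: it holds automatically along the PPGD trajectory.

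Next, assuming $s < s_1$ with $s_1$ as in (\ref{eq:s-bound}), and supposing $P(\bx^{(k+1)}) \neq P(\bx^{(k)})$ for some $k \ge 1$, I would apply Lemma~\ref{lemma:decrease-convex-pieces-change} directly. Its hypotheses are exactly: the step-size bound $s < s_1$, the gradient bound $\ltwonorm{\nabla g(\bw^{(k)})} \le G$ (now supplied by Lemma~\ref{lemma:bounded-gradient}), the change-of-convex-piece condition, and Assumptions~\ref{assumption:main} and~\ref{assumption:nonvanishing-gradient-continuous-endpoints}. Its conclusion is precisely $F(\bx^{(k+1)}) \le F(\bx^{(k)}) - \kappa$ with $\kappa$ as defined in the statement of Lemma~\ref{lemma:decrease-convex-pieces-change}, which is what Theorem~\ref{theorem:decrease-convex-pieces-change} asserts. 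So the proof is two sentences: cite Lemma~\ref{lemma:bounded-gradient} to discharge the gradient hypothesis, then cite Lemma~\ref{lemma:decrease-convex-pieces-change} for the conclusion.

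The genuine obstacle is not in the theorem but in the lemmas it rests on, and a careful write-up should at least flag where that difficulty lies. Lemma~\ref{lemma:decrease-convex-pieces-change} is the crux: one must case-split on how the $i$-th coordinate crosses an endpoint $q_{m_i}$ — whether $f$ is continuous there (negative-curvature case, governed by $C$ and $\eps_0$, contributing $\kappa_1$) or discontinuous there (jump case, governed by $J$, contributing $\kappa_2$) — and in each case lower-bound the decrease of the surrogate objective $F_{P(\bx^{(k)})}$ at $\bz^{(k+1)}$, then account for the extra perturbation incurred by the projection step $\bw^{(k)} = \bP_{\bx^{(k)},R_0}(\bu^{(k)})$ and by replacing $\bz^{(k+1)}$ with $\bz'$ inside NCE. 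This is where the messy terms $A = L_g(G + \sqrt{d}F_0)$ and the final subtraction $s(sL_g(G+\sqrt{d}F_0)+G)(G+\sqrt{d}F_0)$ in $\kappa$ come from, and where the several competing upper bounds on $s$ in (\ref{eq:s-bound}) are forced — each bound is exactly what is needed to keep one of these error terms smaller than the corresponding portion of $\kappa_0$, ensuring $\kappa > 0$. Lemma~\ref{lemma:bounded-gradient}, by contrast, is a monotonicity argument: the NCE update only accepts $\bx^{(k+1)} = \bz^{(k+1)}$ (or $\bz'$) when it does not increase $F$, so $F(\bx^{(k)})$ is nonincreasing, hence all iterates stay in the level set $\cL$, hence $\bw^{(k)}$ stays in $\cL_{R_0}$ (the projection moves $\bu^{(k)}$ by at most $R_0$ back toward $\bx^{(k)} \in \cL$), giving $\ltwonorm{\nabla g(\bw^{(k)})} \le G$ by definition of $G$. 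Granting both lemmas, Theorem~\ref{theorem:decrease-convex-pieces-change} follows with no further computation.
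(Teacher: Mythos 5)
Your proposal is correct and matches the paper's own proof of Theorem~\ref{theorem:decrease-convex-pieces-change}: invoke Lemma~\ref{lemma:bounded-gradient} to discharge the hypothesis $\ltwonorm{\nabla g(\bw^{(k)})} \le G$, then apply Lemma~\ref{lemma:decrease-convex-pieces-change} to obtain the stated decrease. No gap to report.
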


Due to Theorem~\ref{theorem:decrease-convex-pieces-change}, there can only be a finite number of $k$'s such that the convex pieces change across consecutive iterates $\bx^{(k)}$ and $\bx^{(k+1)}$. As a result, after finite iterations all the iterates generated by PPGD lie on the same convex pieces, so the nice properties of convex optimization apply to these iterates. Formally, we have the following theorem stating the convergence rate of PPGD.

\begin{theorem}[Convergence rate of PPGD]
\label{theorem:ppgd-convergence}
Suppose the step size $s < \min\set{s_1,\frac{\eps_0}{L_g (G+\sqrt{d}F_0)},\frac{1}{L_g}}$ with $s_1$ defined by (\ref{eq:s-bound}),  Assumption~\ref{assumption:main} and Assumption~\ref{assumption:nonvanishing-gradient-continuous-endpoints} hold. Then there exists a finite $k_0 \ge 1$ such that the following statements hold. (1) $P(\bx^{(k)}) = \bm^*$ for some $\bm^* \in \NN^d$ for all $k > k_0$. (2) Let $\Omega \defeq
\set{\bx \longmid \bx \textup{ is a limit point of } \set{\bx^{(k)}}_{k \ge 1}, P(\bx) = \bm^*}$ be the set of all limit points of the sequence $\set{\bx^{(k)}}_{k \ge 1}$ generated by Algorithm~\ref{alg:ppgd} lying on the convex pieces indexed by $\bm^*$. Then for any $\bx^* \in \Omega$, $F(\bx^*) = \inf\limits_{\set{\bx \in \RR^d \longmid
P(\bx) = \bm^*}} F(\bx)$, and
\bsal\label{eq:ppgd-convergence-rate}
&F(\bx^{(k)})-F(\bx^*) \le \frac{4}{k^2} U^{(k_0)}
\esal
for all $k > k_0$, where $U^{(k_0)} \defeq \big( \frac{1}{2s}\|(t_{k_0-1}-1)\bx^{(k_0-1)} - t_{k_0-1} \bz^{(k_0)} + \bx^*\|_2^2 + t_{k_0-1}^2 ( F(\bx^{(k_0)})-F(\bx^*) ) \big)$. Moreover, if $f_{\bm_i^*}$ does not take the third case in (\ref{eq:surrogate-right-endpoint}) or (\ref{eq:surrogate-left-endpoint}) for all $i \in [d]$, then $\bx^*$ is a critical point of $F$, that is, $0 \in \partial F(\bx^*)$.
\end{theorem}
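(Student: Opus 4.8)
The plan is to obtain statement (1) directly from the piece-stabilization estimate of Theorem~\ref{theorem:decrease-convex-pieces-change}, and then to prove statement (2) by observing that once the convex pieces are frozen, PPGD reduces to a monotone accelerated proximal-gradient scheme on a convex composite objective, to which the standard FISTA potential-function argument applies starting from iteration $k_0$. \emph{Step 1 (existence of $k_0$ and of the common index vector $\bm^*$).} I would first check that $s<s_1$ in (\ref{eq:s-bound}) makes $\kappa>0$: unwinding the definitions, the first four terms of $s_1$ force $\kappa_1,\kappa_2>0$, hence $\kappa_0>0$, while the fifth term is precisely the positive root of $L_gB^2s^2+GBs=\kappa_0$ with $B\defeq G+\sqrt{d}F_0$, which forces $\kappa=\kappa_0-s\big(sL_gB+G\big)B>0$. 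By Theorem~\ref{theorem:decrease-convex-pieces-change}, at each $k$ with $P(\bx^{(k+1)})\neq P(\bx^{(k)})$ the objective drops by at least $\kappa>0$; since $F(\bx^{(k)})$ is nonincreasing (Lemma~\ref{lemma:bounded-gradient}) and $\inf F>-\infty$ (Assumption~\ref{assumption:main}(a)), only finitely many such indices can exist, so there is a finite $k_0\ge1$ with $P(\bx^{(k)})\equiv\bm^*$ for all $k>k_0$, which is statement (1).

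\emph{Step 2 (reduction to convex composite optimization on the frozen pieces).} For $k>k_0$ all iterates lie in $\prod_i\cR_{\bm^*_i}$, on which $F=F_{\bm^*}$, and each surrogate $f_{\bm^*_i}$ is convex unless it falls into the third case of (\ref{eq:surrogate-right-endpoint})/(\ref{eq:surrogate-left-endpoint}). I would then show that for $k>k_0$ the PPGD step agrees, up to the projection, with the monotone APG of Algorithm~\ref{alg:apg-monotone} on $F_{\bm^*}$: since $P$ no longer changes, Algorithm~\ref{alg:NCE} simply returns $\bz^{(k+1)}$ when the test $F_{\bm^*}(\bz^{(k+1)})\le F(\bx^{(k)})$ passes and otherwise the step stalls ($\bx^{(k+1)}=\bx^{(k)}$). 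The map $\bw^{(k)}=\bP_{\bx^{(k)},R_0}(\bu^{(k)})$ is the Euclidean projection onto the closed convex set $K^{(k)}\defeq\prod_i\big(\overline{\cR_{\bm^*_i}}\cap B(\bx^{(k)}_i,R_0)\big)$, which contains $\bx^{(k)}$ and, for $k$ large (using $\|\bx^{(k)}-\bx^{(k-1)}\|\to0$ and $\|\bz^{(k)}-\bx^{(k)}\|\to0$), any fixed comparison point on the frozen pieces; hence $\iprod{\bu^{(k)}-\bw^{(k)}}{\bx-\bw^{(k)}}\le0$ for such $\bx$, which is exactly what is needed to absorb the projection into the FISTA estimate without changing its form.

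\emph{Step 3 (potential argument, infimum characterization, critical point).} With $L\defeq1/s\ge L_g$ (using $s<1/L_g$), the prox-gradient descent inequality yields, for any comparison point $\bx$ on the frozen pieces,
\[ F_{\bm^*}(\bz^{(k+1)})\le F_{\bm^*}(\bx)+\tfrac1s\iprod{\bw^{(k)}-\bx}{\bz^{(k+1)}-\bw^{(k)}}+\tfrac1{2s}\ltwonorm{\bz^{(k+1)}-\bw^{(k)}}^2. \]
Applying this with $\bx=\bx^{(k)}$ and $\bx=\bx^*$, weighting by $t_k$ and $t_k^2$, using the identity $t_{k+1}^2-t_{k+1}=t_k^2$ together with the extrapolation (\ref{eq:uk-monotone})–(\ref{eq:wk-monotone}) and the obtuse-angle inequality of Step~2, and telescoping, I would obtain that
\[ V_k\defeq t_{k-1}^2\big(F(\bx^{(k)})-F(\bx^*)\big)+\tfrac1{2s}\ltwonorm{(t_{k-1}-1)\bx^{(k-1)}-t_{k-1}\bz^{(k)}+\bx^*}^2 \]
is nonincreasing for $k>k_0$, so $V_k\le V_{k_0}=U^{(k_0)}$; combined with $t_{k-1}\ge k/2$ (from $t_1=1$ and $t_{k+1}\ge t_k+\tfrac12$) this gives (\ref{eq:ppgd-convergence-rate}). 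Repeating the computation with a generic comparison point $\tilde\bx$ yields $\limsup_kF(\bx^{(k)})\le F_{\bm^*}(\tilde\bx)$, while $F(\bx^{(k)})\ge\inf_{\{P(\bx)=\bm^*\}}F$ always holds; hence $\lim_kF(\bx^{(k)})=\inf_{\{P(\bx)=\bm^*\}}F$, and since a lower semicontinuous convex function is continuous on each $\cR_m$, $F$ is continuous at $\bx^*$ along the frozen pieces, so $F(\bx^*)=\lim_kF(\bx^{(k)})$ equals that infimum. When no $f_{\bm^*_i}$ is in the third case, $F_{\bm^*}$ is globally convex and $\bx^*$ minimizes it; Assumption~\ref{assumption:nonvanishing-gradient-continuous-endpoints} forbids any $\bx^*_i$ from sitting at a continuous endpoint (otherwise $0\in[\nabla g(\bx^*)]_i+\partial f_{\bm^*_i}(\bx^*_i)$ would force $|[\nabla g(\bx^*)]_i+p^{\pm}|=0<\eps_0$), so $F=F_{\bm^*}$ near $\bx^*$ and $0\in\partial F_{\bm^*}(\bx^*)=\partial F(\bx^*)$.

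\emph{Main obstacle.} I expect Step~2 to be the hard part: one must control the projection $\bP_{\bx^{(k)},R_0}$ rigorously, showing that for $k>k_0$ the set $K^{(k)}$ eventually contains the comparison points used in the FISTA estimate — which requires quantitative bounds on $\|\bx^{(k)}-\bx^{(k-1)}\|$ and $\|\bz^{(k)}-\bx^{(k)}\|$ and care when $\bx^*$ lies on the boundary of a piece — and that feeding the resulting obtuse-angle inequality into the telescoping does not spoil the $\cO(1/k^2)$ rate. Handling the stalling iterations (where the monotonicity test fails and $\bx^{(k+1)}=\bx^{(k)}$) within the potential recursion is a secondary technical point.
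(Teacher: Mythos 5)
Your overall architecture is the same as the paper's (finitely many piece transitions via Theorem~\ref{theorem:decrease-convex-pieces-change} and monotonicity, then a FISTA potential argument on the frozen pieces with the projection absorbed by nonexpansiveness), but there is a genuine gap in your Step~2. It is not true that, once $P(\bx^{(k)})$ is frozen, Algorithm~\ref{alg:NCE} ``simply returns $\bz^{(k+1)}$ when the test passes and otherwise the step stalls.'' Even when $F_{P(\bx^{(k)})}(\bz^{(k+1)}) \le F(\bx^{(k)})$, the probe $\bz^{(k+1)}$ may cross to a different piece in some coordinate while both sufficient conditions fail (e.g.\ $f$ continuous at the crossed endpoint but $d_{i,1} < w_0\, d_{i,0}$), in which case NCE returns $\bx^{(k+1)} = \bx^{(k)}$. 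For such iterations the inequality $F(\bx^{(k+1)}) \le F_{\bm^*}(\bz^{(k+1)})$ --- exactly what you need to replace $F_{\bm^*}(\bz^{(k+1)})$ by $F_{\bm^*}(\bx^{(k+1)})$ and make your potential $V_k$ telescope --- has the wrong sign in general, since the passed test gives $F(\bx^{(k+1)}) = F(\bx^{(k)}) \ge F_{\bm^*}(\bz^{(k+1)})$. This is not a ``secondary technical point'': the paper devotes a dedicated lemma (Lemma~\ref{lemma:fx-fz}) to showing such iterations occur only finitely often, via a contradiction argument in which $t_{k-1}/t_k \to 1$ forces $\bu^{(m_k)} \to \bz^{(m_k)}$, hence $\bw^{(m_k)}$ converges to the crossed endpoint and $d_{i,1}/d_{i,0} \to 1 \ge w_0$ at the next step, so NCE would trigger a piece change after stabilization. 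That argument is precisely where Assumption~\ref{assumption:nonvanishing-gradient-continuous-endpoints} and the extra step-size condition $s < \frac{\eps_0}{L_g (G+\sqrt{d}F_0)}$ are consumed --- a condition that appears in the theorem's hypothesis but is never used anywhere in your proposal, which is a symptom of the missing step.

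Two smaller issues. Your absorption of the projection rests on the unproved claims $\ltwonorm{\bx^{(k)}-\bx^{(k-1)}} \to 0$ and $\ltwonorm{\bz^{(k)}-\bx^{(k)}} \to 0$; the paper avoids them by taking as comparison point the convex combination $(1-1/t_k)\bx^{(k)} + t_k^{-1}\bar\bx$, which lies in $\overline{\cR_{P(\bx_i^{(k)})}} \cap B(\bx_i^{(k)},R_0)$ for all large $k$ simply because $t_k \to \infty$, and then invokes nonexpansiveness of the projection; you would need either to prove your two limits or to switch to this device. Relatedly, the comparison point should be a minimizer $\bar\bx$ of the surrogate $F_{\bm^*}$ over the region $\cR^*$ on which it is convex (its existence argued via coercivity), rather than a generic $\tilde\bx$; your alternative critical-point argument (using Assumption~\ref{assumption:nonvanishing-gradient-continuous-endpoints} to keep $\bx^*$ away from continuous endpoints so that $F = F_{\bm^*}$ locally) differs from the paper's but is plausible and secondary to the main gap above.
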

\begin{remark}
\label{remark:no-assumption-nonvanishing-gradient}
If the convex pieces $\set{\cR_m}_{m=1}^M$ has no endpoints at which $f$ is continuous, then Theorem~\ref{theorem:ppgd-convergence} holds without requiring
Assumption~\ref{assumption:nonvanishing-gradient-continuous-endpoints}. In particular, Theorem~\ref{theorem:ppgd-convergence} holds without
Assumption~\ref{assumption:nonvanishing-gradient-continuous-endpoints} for problem (\ref{eq:piecewise-convex-objective}) with indicator penalty or $\ell^0$-norm regularizer.
\end{remark}
\begin{remark}
\label{remark:no-assumption-piecewise}
All the statements of Theorem~\ref{theorem:ppgd-convergence} still hold if $f$ is not piecewise convex while satisfying all the other conditions of Assumption~\ref{assumption:main} and Assumption~\ref{assumption:nonvanishing-gradient-continuous-endpoints},
if $f$ restricted on the final convex piece $\cR_{\bm_i^*}$ is convex for all $i \in [d]$. The reason is that we only need the convexity of $f$ on the final convex pieces to prove Nesterov's optimal convergence rate in (\ref{eq:ppgd-convergence-rate}).
\end{remark}

\textbf{Roadmap of the proof of Theorem~\ref{theorem:ppgd-convergence}.} Proof of Theorem~\ref{theorem:ppgd-convergence} is presented in Section~\ref{sec:proof-main-theorem} of the supplementary, and the proof consists of three steps. In step $1$, it is proved that there exists a finite $k_0 \ge k_1$ such that for all $k > k_0$,
\bsals
F(\bx^{(k)})-F_{\bm^*}({\bar \bx}) \le \cO(\frac{1}{k^2}).
\esals
Noting that $F(\bx^{(k)}) = F_{\bm^*}(\bx^{(k)}) \ge F_{\bm^*}({\bar \bx})$ by the optimality of $\bar \bx$, the above inequality combined with the monotone nonincreasing of $\set{F(\bx^{(k)})}$ indicate that
$F(\bx^{(k)}) \downarrow F_{\bm^*}({\bar \bx})$.

In step $2$, it is proved that $F(\bx') = F_{\bm^*}({\bar \bx})$ for any limit point $\bx' \in \Omega$. According to the definition of $F_{\bm^*}$ and the optimality of $\bar \bx$, it follows that $F(\bx')$ is a local minimum of $F$ and a global minimum of $F_{\bm^*}$ over $\cR^*$.

In step $3$, it is proved that any limit point $\bx' \in \Omega$ is a critical point of $F$ under a mild condition, following the argument in step $2$.



\section{EXPERIMENTAL RESULTS}

In this section, PPGD and the baseline optimization methods are used to solve the capped-$\ell^1$ regularized logistic regression problem, $\min_{\bx} \frac{1}{n} \sum\limits_{i=1}^n \log(1+\exp(-y_i \bx^{\top}x_i)) + h(\bx)$, where $\set{x_i,y_i}_{i=1}^n$ are training data, $g(\bx) = \frac{1}{n} \sum\limits_{i=1}^n \log(1+\exp(-y_i \bx^{\top}x_i))$, and $h(\bx) = \sum_{i=1}^d f(\bx_i)$ with $f(x) = \lambda \min\set{\abth{x},b}$. We set $\lambda =0.2$ and conduct experiments on the MNIST handwritten digits dataset~\citep{lecun1998gradient}.

\begin{wrapfigure}{r}{0.6\textwidth}
  \begin{center}
    \includegraphics[width=0.6\textwidth]{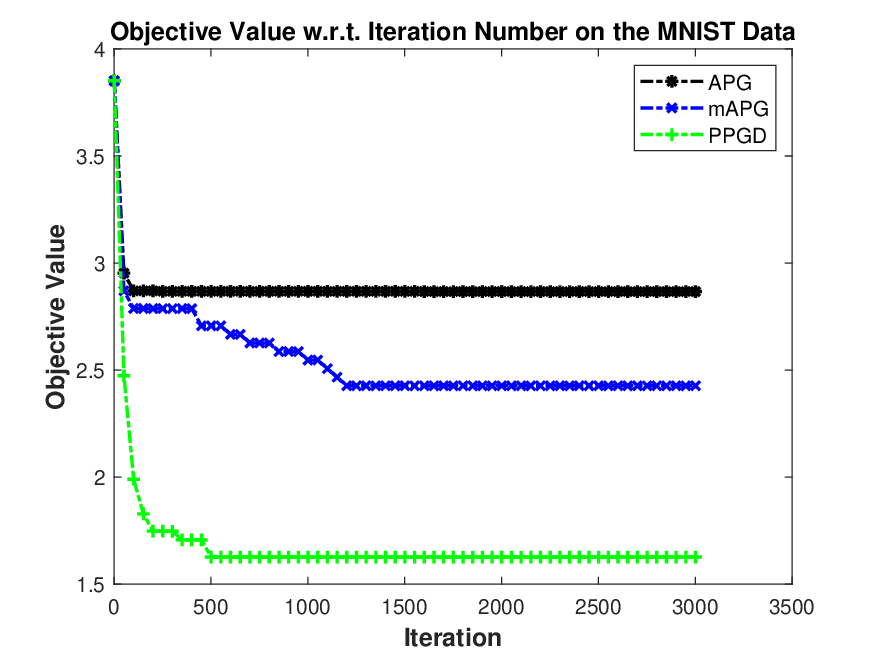}
  \end{center}
  \caption{Illustration of the objective value with respect to the iteration number on the MNIST data for capped-$\ell^1$ regularized logistic regression}
  \label{fig:exp-minist-regression}
\end{wrapfigure}

The MNIST data set contains $70,000$ examples of handwritten
digit images of $10$ classes. Each image is of size $28 \times 28$  and represented by a $784$-dimensional vector. The CIFAR-10 dataset contains  $60000$ images of size $32 \times 32$ with $10$ classes. For each dataset, we randomly sample two classes with $5000$ images in each class, which form the training data $\set{\bx_i,y_i}_{i=1}^{10000}$ with $\set{\bx_i}_{i=1}^{10000}$ being the sampled images and $\set{y_i}_{i=1}^{10000}$ being the corresponding class labels ($\pm 1$).  We compare PPGD to the regular monotone Accelerated Proximal Gradient (APG) described in Algorithm~\ref{alg:apg-monotone} and the monotone Accelerated Proximal Gradient (mAPG) algorithm~\citep{li2015accelerated}. Due to the observation that monotone version of accelerated gradient  methods usually converge faster than its nonmonotone counterparts, the regular nonmonotone APG and the nonmonotone Accelerated Proximal Gradient (nmAPG) method~\citep{li2015accelerated} are not included in the baselines. Figure~\ref{fig:exp-minist-regression} illustrate the decrease of objective value with respect to the iteration number on the MNIST dataset. It can be observed that PPGD always converges faster than all the competing baselines, evidencing the proved fast convergence results in this paper.

\section*{Acknowledgement}
This material is based upon work supported by the U.S. Department of Homeland Security under Grant Award Number 17STQAC00001-07-00.
The views and conclusions contained in this document are those of the authors and should not be interpreted as necessarily representing the official policies, either expressed or implied, of the U.S. Department of Homeland Security.
This work was also supported by the 2023 Mayo Clinic
and Arizona State University Alliance for Health
Care Collaborative Research Seed Grant Program.

\section{CONCLUSION}

We present Projective Proximal Gradient Descent (PPGD) to solve a class of challenging nonconvex and nonsmooth optimization problems. Using a novel projection operator and a carefully designed Negative-Curvature-Exploitation algorithm, PPGD locally matches Nesterov's optimal convergence rate of first-order methods on smooth and convex objective function. The effectiveness of PPGD is evidenced by experiments.

\bibliography{refs_scholar}
\bibliographystyle{iclr2023_conference}

\appendix

\newpage
\onecolumn
\section{More details about this paper}

\subsection{Kurdyka-{\L}ojasiewicz (K\L{}) Property}
\label{sec:KL-function}

\begin{definition}[Kurdyka-{\L}ojasiewicz (K\L{}) Property]
\label{def:KL-function}
Let $u \colon \RR^d \to (-\infty, \infty]$ be proper
and lower semicontinuous. $u$ is said to satisfy the Kurdyka-Lojasiewicz (K\L{}) property at
$\bu \in {\rm dom}\, \partial u \defeq \{\bu' \in \RR^d \colon \partial u(\bu') \neq \emptyset\}$, if there exist $\eta \in (0, \infty]$, a neighborhood $U$ of $\bu$ and a function $\phi \in \Phi_{\eta}$, such that for all
$\bar \bu \in U \cup [u(\bu) < u(\bar \bu) < u(\bu)+\eta]$, the following inequality holds:
\begin{align*}
      &\phi'\big(u(\bar \bu) - u(\bu) \big) {\rm dist}(\bzero, \partial u (\bar \bu)) \ge 1.
\end{align*}
Here $\eta \in (0, \infty]$, $\Phi_{\eta}$ is defined to be the class of all concave and continuous functions
$\phi \colon [0, \eta) \to \RR^{+}$ which satisfy the following conditions:
\begin{itemize}
      \item[1.] $\phi(0) = 0$;
      \item[2.] $\phi$ is continuously differentiable on $(0, \eta)$ and continuous at $0$;
      \item[3.] $\phi'(r) > 0$ for all $r \in (0, \eta)$.
\end{itemize}

If $u$ satisfies the K\L{} property at each point of ${\rm dom} \, \partial u$, then $u$ is called a K\L{} function.
\end{definition}

\subsection{Proximal Gradient Descent (PGD) and monotone Accelerated Proximal Gradient (APG) Descent}
\label{sec:pgd-apg}


We describe PGD and the regular monotone APG in this subsection.

In the $k$-th iteration of PGD for $k \ge 1$, gradient descent is performed on the squared loss term $g(\bx)$ to obtain an intermediate variable as the result of gradient descent, i.e. ${\bx}^{(k)} - s \nabla g(\bx^{(k)})$,
where $s>0$ is the step size, and $\frac{1}{s}$ is usually chosen to be larger than $L_g$.
${\bx}^{(k+1)}$ is computed by proximal mapping on ${\bx}^{(k)} - s \nabla g(\bx^{(k)})$.

\begin{algorithm}[!ht]
\renewcommand{\algorithmicrequire}{\textbf{Input:}}
\renewcommand\algorithmicensure {\textbf{Output:} }
\caption{Proximal Gradient Descent ($\bx^{(0)}$)  }
\label{alg:pgd}
\begin{algorithmic}[1]
\State
\textbf{\bf for } k=1,\ldots, \,\, \textbf{\bf do }
\bsal
&\bx^{(k+1)} = \prox_{sh}\pth{{\bx}^{(k)} - s \nabla g(\bx^{(k)})} \label{eq:ppgd-iteration}
\esal
\end{algorithmic}
\end{algorithm}
\begin{algorithm}[!htb]
\renewcommand{\algorithmicrequire}{\textbf{Input:}}
\renewcommand\algorithmicensure {\textbf{Output:} }
\caption{Monotone Accelerated Proximal Gradient Descent ($\bx^{(0)}$)}
\label{alg:apg-monotone}
\begin{algorithmic}[1]
\State {$\bz^{(1)} = \bx^{(1)} = \bx^{(0)}$, $t_0$ = 0.}

\State \textbf{\bf for } $k=1,\ldots,$ \,\, \textbf{\bf do }
\bsals
&\bu^{(k)} = \bx^{(k)} + \frac{t_{k-1}}{t_k}(\bz^{(k)} - \bx^{(k)}) + \frac{t_{k-1}-1}{t_k}(\bx^{(k)} - \bx^{(k-1)}),  \\
&\bz^{(k+1)} = {\rm prox}_{sh}(\bu^{(k)} - s \nabla g(\bu^{(k)})),
 \\
&t_{k+1} = \frac{\sqrt{1+4t_k^2}+1}{2},  \\
&\bx^{(k+1)} =
\begin{cases}
        \bz^{(k+1)} & \text{if}\ F(\bz^{(k+1)}) \le F(\bx^{(k)}) \\
        \bx^{(k)} & \text{otherwise}.
\end{cases}
\esals

\end{algorithmic}
\end{algorithm}
The iterations start from $k=1$ and continue until the sequence $\{F({\bx}^{(k)})\}_k$ or $\{{\bx}^{(k)}\}_k$ converges or maximum iteration number is achieved. The optimization algorithm for the $\ell^{0}$ sparse approximation problem (\ref{eq:piecewise-convex-objective}) by PGD is described in Algorithm~\ref{alg:pgd}. In practice, the time complexity of optimization by PGD is $\cO(Mdn)$ where $M$ is the number of iterations (or maximum number of iterations) for PGD.

The regular monotone Accelerated Proximal Gradient (APG) descent algorithm is described in Algorithm~\ref{alg:apg-monotone}.

\section{Proofs}
The following lemma shows that the Fermat's rule still applies to functions in the sense of Fr\'echet subdifferential, that is, local minimum has $\bzero$ in its Fr\'echet subdifferential and also its limiting subdifferential.

\begin{lemma}\label{lemma::fermat-frechet-subdifferential}
\textup
{(Fermat's rule for Fr\'echet subdifferential and limiting subdifferential, also in~\citet[Theorem 10.1]{rockafellar2009variational})}
Let $u$ be a real-valued function defined on $\Omega \subseteq \RR^d$, $u \defeq \Omega \to \RR$, has a local minimum point at $\bx \in \Omega$. Then $\bzero \in \tpartial u(\bx) \subseteq \partial u(\bx)$.
\end{lemma}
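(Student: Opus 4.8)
The statement bundles two elementary facts: Fermat's rule for the Fr\'echet subdifferential (namely $\bzero\in\tpartial u(\bx)$ at a local minimizer) and the inclusion $\tpartial u(\bx)\subseteq\partial u(\bx)$ of the Fr\'echet subdifferential in the limiting subdifferential; chaining them also gives $\bzero\in\partial u(\bx)$. The plan is to prove each directly from Definition~\ref{def:subdifferential-critical-points}, with no auxiliary machinery.

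First I would extend $u$ to all of $\RR^d$ by setting $u(\by)=+\infty$ for $\by\notin\Omega$, so that the Fr\'echet and limiting subdifferentials of Definition~\ref{def:subdifferential-critical-points} are literally applicable; note $\bx\in\dom u$ because $u(\bx)\in\RR$, that $\dom u=\Omega$, and that a local minimum point of $u$ on $\Omega$ is a local minimum of the extended function. For Fermat's rule I substitute $\bu=\bzero$ into the defining inequality of $\tpartial u(\bx)$, reducing the claim to $\liminf_{\by\neq\bx,\ \by\to\bx,\ \by\in\dom u}\frac{u(\by)-u(\bx)}{\|\by-\bx\|}\ge 0$. By local minimality there is a neighborhood of $\bx$ on which $u(\by)\ge u(\bx)$, so every difference quotient appearing in the limit is nonnegative, hence the $\liminf$ is nonnegative and $\bzero\in\tpartial u(\bx)$. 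For the inclusion, I take any $\bv\in\tpartial u(\bx)$ and feed the \emph{constant} sequences $\bx^k\equiv\bx$ and $\tilde\bv^k\equiv\bv$ into the definition of $\partial u(\bx)$: trivially $\bx^k\to\bx$, $u(\bx^k)\to u(\bx)$, $\tilde\bv^k\in\tpartial u(\bx^k)=\tpartial u(\bx)$ and $\tilde\bv^k\to\bv$, so $\bv\in\partial u(\bx)$; thus $\tpartial u(\bx)\subseteq\partial u(\bx)$ and in particular $\bzero\in\partial u(\bx)$.

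There is essentially no genuine obstacle here — this is the finite-dimensional instance of \citet[Theorem~10.1]{rockafellar2009variational}. The only points that deserve a word of care are the passage to the extended-real-valued function (so the subdifferentials are well defined and the domain over which ``local minimum'' is measured is exactly $\dom u=\Omega$) and the observation that the $\liminf$ in the Fr\'echet subdifferential is taken over $\by\in\dom u$, which is precisely the set on which the local-minimum inequality is available. This lemma is then what licenses passing from ``$\bx^*$ is a local minimizer of $F$'' to ``$\bzero\in\partial F(\bx^*)$'' in the criticality part of Theorem~\ref{theorem:ppgd-convergence}.
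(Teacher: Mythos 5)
Your proposal is correct and follows essentially the same route as the paper's proof: local minimality makes every difference quotient nonnegative, so $\liminf_{\by \to \bx}\frac{u(\by)-u(\bx)}{\|\by-\bx\|}\ge 0$ and hence $\bzero \in \tpartial u(\bx)$, with the inclusion $\tpartial u(\bx) \subseteq \partial u(\bx)$ then giving $\bzero \in \partial u(\bx)$. Your only additions — extending $u$ by $+\infty$ off $\Omega$ and verifying the inclusion via constant sequences in the definition of the limiting subdifferential — are harmless elaborations of steps the paper simply cites.
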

\begin{proof}
Since $\bx \in \Omega$ is a local minimum point, then there exists a neighborhood $\bx \in \cU(\bx)$ such that $u(\by) \ge u(\bx)$ for all $\by \in \cU(\bx)$. It follows that
\bsal\label{eq:fermat-frechet-subdifferential-seg1}
&\liminf\limits_{\by \neq \bx,\by \to \bx, \by \in \dom u } \frac{u(\by)-u(\bx)}{\|\by-\bx\|} \ge 0.
\esal%
By (\ref{eq:fermat-frechet-subdifferential-seg1}), $\bzero \in \tpartial u(\bx)$. Because $\tpartial u(\bx) \subseteq \partial u(\bx)$, $\bzero \in \tpartial u(\bx) \subseteq \partial u(\bx)$.
\end{proof}

We need to have the following two lemmas before proving Lemma~\ref{lemma:decrease-convex-pieces-change}.

\begin{lemma}[Decrease of $f$ at continuous endpoint]
\label{lemma:Hamiltonian-descrease-continuous-point}
Suppose Assumption~\ref{assumption:main} and Assumption~\ref{assumption:nonvanishing-gradient-continuous-endpoints} hold, $s < \min\set{\frac{s_0}{G+F_0},\frac{\eps_0}{L_g (1-w_0) (G + F_0)}} $, $\ltwonorm{\nabla g(\bw^{(k)})} \le G$, $P(\bx_i^{(k+1)}) \neq P(\bx_i^{(k)}) = m_i$ for some $k \ge 0$ and some $i \in [d]$, and $f$ is continuous at $q(\bw_i^{(k)})$. Then $f(\bx_i^{(k+1)}) \le f_{m_i}(\bz_i^{(k+1)})$. Furthermore, if $d_{i,1} \ge w_0 d_{i,0}$, then \bsal\label{eq:Hamiltonian-descrease-continuous-point}
f(\bx_i^{(k+1)}) &\le f_{m_i}(\bz_i^{(k+1)}) - \kappa_1,
\esal
where $\kappa_1 \defeq s C w_0 \pth{ \eps_0 - sL_g (1-w_0) (G + F_0) }$.
\end{lemma}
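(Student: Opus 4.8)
The plan is to analyze the one-dimensional coordinate $i$ in isolation and track what the proximal step produces near the continuous endpoint $q := q(\bw_i^{(k)})$. First I would observe that since $P(\bx_i^{(k+1)}) \neq P(\bx_i^{(k)}) = m_i$ and the NCE algorithm only updates the coordinate when $\bz'_i$ is accepted, we have $\bx_i^{(k+1)} = \bz'_i$, which equals either $\bz_i^{(k+1)}$ or (in the single-point-interval case) $q$ itself; in the continuous case the single-point branch in line~12 of Algorithm~\ref{alg:NCE} does not fire, so $\bx_i^{(k+1)} = \bz_i^{(k+1)}$. The key starting identity is that $\bz_i^{(k+1)} = \prox_{s f_{m_i}}(\bth{\bw^{(k)} - s\nabla g(\bw^{(k)})}_i)$, so I would write the first-order optimality condition for this proximal subproblem: $0 \in \bz_i^{(k+1)} - \bw_i^{(k)} + s[\nabla g(\bw^{(k)})]_i + s\,\partial f_{m_i}(\bz_i^{(k+1)})$. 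Because the surrogate $f_{m_i}$ is linear outside $\cR_{m_i}$ with slope $v^- = \lim_{x\to q^-} f'(x)$ at the relevant endpoint, and $\bz_i^{(k+1)}$ lies on the far side of $q$ from $\cR_{m_i}$ (that is what ``the convex piece changed'' means), the subdifferential term is exactly the constant slope $v^-$. This pins down $\bz_i^{(k+1)} = \bw_i^{(k)} - s([\nabla g(\bw^{(k)})]_i + v^-)$, giving an exact formula for $d_{i,0} = \abth{\bz_i^{(k+1)} - \bw_i^{(k)}} = s\abth{[\nabla g(\bw^{(k)})]_i + v^-}$.

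Next I would compare $f(\bx_i^{(k+1)}) = f(\bz_i^{(k+1)})$ with $f_{m_i}(\bz_i^{(k+1)})$. Both $f$ and $f_{m_i}$ agree at $q$ (by continuity of $f$ at $q$ and the construction in~\eqref{eq:surrogate-right-endpoint}/\eqref{eq:surrogate-left-endpoint}), and on the far side of $q$ the surrogate $f_{m_i}$ continues with slope $v^-$ while the true $f$ continues with slope governed by the adjacent piece, namely $v^+ := \lim_{x\to q^+} f'(x)$ (or $\lim_{x\to q^-}$ depending on orientation). Assumption~\ref{assumption:main}(d) says $v^- - v^+ \ge C > 0$, i.e. $f$ bends \emph{downward} relative to the linear surrogate as we cross $q$. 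Integrating this slope gap over the interval from $q$ to $\bz_i^{(k+1)}$, whose length is $d_{i,1} = \abth{\bz_i^{(k+1)} - q}$, together with convexity of $f$ on $\cR_{m_i+1}$ (so the slope of $f$ there is at most $v^+$ in the direction of increasing distance), yields $f(\bz_i^{(k+1)}) \le f_{m_i}(\bz_i^{(k+1)}) - C\, d_{i,1}$; the weaker bound $f(\bx_i^{(k+1)}) \le f_{m_i}(\bz_i^{(k+1)})$ is the special case $d_{i,1} = 0$ and follows a fortiori. Here I would need the bounded-subdifferential part of Assumption~\ref{assumption:main}(d) and the differentiability window $(q-s_0,q)$, $(q,q+s_0)$ from Assumption~\ref{assumption:main}(b), plus $s < s_0/(G+F_0)$ to guarantee $d_{i,0} \le s(G+F_0) < s_0$ so that $\bz_i^{(k+1)}$ stays inside the window where these slope statements are valid.

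Finally, to get the quantitative constant $\kappa_1$, I would lower-bound $d_{i,1}$ under the hypothesis $d_{i,1} \ge w_0 d_{i,0}$. We have $d_{i,0} = s\abth{[\nabla g(\bw^{(k)})]_i + v^-}$; the goal is to show this is not too small. The point is that $\bw^{(k)}$ lies within $R_0$ of $\bx^{(k)}$ and on the same piece, so $\bw_i^{(k)}$ is close to $q$, and I would use Lipschitzness of $\nabla g$ (constant $L_g$) together with Assumption~\ref{assumption:nonvanishing-gradient-continuous-endpoints}, which gives $\abth{[\nabla g(\bx)]_i + v^-} \ge \eps_0$ whenever $\bx_i = q$, to conclude $\abth{[\nabla g(\bw^{(k)})]_i + v^-} \ge \eps_0 - L_g\abth{\bw_i^{(k)} - q}$. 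The displacement $\abth{\bw_i^{(k)} - q}$ is itself controlled: crossing from $\bw_i^{(k)}$ to $\bz_i^{(k+1)}$ means $q$ lies between them, so $\abth{\bw_i^{(k)} - q} \le d_{i,0} - d_{i,1} \le (1-w_0) d_{i,0} \le (1-w_0)s(G+F_0)$. Substituting back gives $d_{i,0} \ge s(\eps_0 - (1-w_0)s L_g(G+F_0))$, hence $d_{i,1} \ge w_0 d_{i,0} \ge s w_0(\eps_0 - (1-w_0)s L_g(G+F_0))$, and multiplying by $C$ produces exactly $\kappa_1 = sC w_0(\eps_0 - sL_g(1-w_0)(G+F_0))$. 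The step-size condition $s < \eps_0/(L_g(1-w_0)(G+F_0))$ is precisely what keeps $\kappa_1 > 0$. I expect the main obstacle to be the bookkeeping in the second step — correctly handling the several orientation cases (left vs.\ right endpoint, which side $\cR_{m_i}$ is on) and verifying that the relevant iterates genuinely stay within the $s_0$-differentiability window so that the slope-comparison argument is legitimate; the arithmetic in the third step is then routine once the geometric picture is fixed.
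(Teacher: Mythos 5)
Your proposal retraces the paper's proof essentially step for step: the exact displacement identity $\bz_i^{(k+1)} - \bw_i^{(k)} = -s\big([\nabla g(\bw^{(k)})]_i + v^-\big)$ coming from the prox of the linear surrogate beyond $q$, the bound $d_{i,0} \le s(G+F_0) < s_0$ placing $\bz_i^{(k+1)}$ inside the differentiability window, the slope-gap comparison at the continuous endpoint producing the decrement $C\,d_{i,1}$, and the lower bound $d_{i,0} \ge s\big(\eps_0 - sL_g(1-w_0)(G+F_0)\big)$ obtained by applying Assumption~\ref{assumption:nonvanishing-gradient-continuous-endpoints} at the auxiliary point whose $i$-th coordinate is $q$, together with $|\bw_i^{(k)} - q| = d_{i,0}-d_{i,1} \le (1-w_0)d_{i,0}$ and the $L_g$-Lipschitzness of $\nabla g$. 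This is exactly the paper's chain of displayed inequalities, so the approach is the same, not a different route.

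One caveat on your second step: the parenthetical ``convexity of $f$ on $\cR_{m_i+1}$ (so the slope of $f$ there is at most $v^+$ in the direction of increasing distance)'' is stated backwards. Convexity makes $f'$ nondecreasing as one moves away from $q$ into the adjacent piece, so it yields the lower bound $f(\bz_i^{(k+1)}) \ge f(q) + v^+\,(\bz_i^{(k+1)}-q)$, whereas your integration argument needs the upper bound $f(\bz_i^{(k+1)}) \le f(q) + v^+\,(\bz_i^{(k+1)}-q)$, equivalently that $f' \le v^- - C$ on the crossed segment. That upper bound does hold when the adjacent piece is linear near $q$ (capped-$\ell^1$, leaky capped-$\ell^1$, indicator penalty), but it is not a consequence of convexity alone: a convex adjacent piece with sharp upward curvature just past $q$ would violate it. You have, however, faithfully reproduced the paper's own reasoning here --- the paper lets $x \to q$ in $f(\bz_i^{(k+1)}) \le f(x) + f'(\bz_i^{(k+1)})(\bz_i^{(k+1)}-x)$ and then writes $v^+$ in place of $f'(\bz_i^{(k+1)})$, which is the same substitution in the same direction (convexity only gives $f'(\bz_i^{(k+1)}) \ge v^+$). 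So this is a weak point shared with the paper's proof rather than a new gap introduced by your write-up; apart from making that substitution explicit (or adding a local-linearity/curvature condition on the adjacent piece), your argument matches the paper's.
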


\begin{lemma}[Decrease of $f$ at discontinuous endpoints]
\label{lemma:Hamiltonian-descrease-jump-point-down}
Suppose Assumption~\ref{assumption:main} holds, $s < \min\set{\frac{s_0}{G+F_0},\frac{J}{2F_0(G+F_0)}}$, $\ltwonorm{\nabla g(\bw^{(k)})} \le G$, $P(\bx_i^{(k+1)}) \neq P(\bx_i^{(k)}) = m_i$ for some $k \ge 0$ and some $i \in [d]$. Then
\bsal\label{eq:Hamiltonian-descrease-jump-point-down}
f(\bx_i^{(k+1)}) \le f_{m_i}(\bz_i^{(k+1)}) - \kappa_2,
\esal
where $\kappa_2 \defeq J - 2sF_0(G+F_0)$.
\end{lemma}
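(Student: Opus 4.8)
The plan is to track a single coordinate $i$ across one PPGD iteration, comparing the value $f(\bx_i^{(k+1)})$ after the Negative-Curvature-Exploitation step against the surrogate value $f_{m_i}(\bz_i^{(k+1)})$ that drove the update, and to exploit the fact that when $f$ is only one-sidedly continuous at the relevant endpoint $q(\bw_i^{(k)})$, the jump value $J$ provides a fixed head start on the decrease. First I would set $q \defeq q(\bw_i^{(k)})$ and recall from Algorithm~\ref{alg:NCE} that, since $P(\bz_i^{(k+1)}) \neq P(\bx_i^{(k)}) = m_i$ and $f$ is discontinuous at $q$, the flag is set to true, so $\bx_i^{(k+1)} = \bz'_i$, which equals $q$ when $\cR_{P(\bz_i^{(k+1)})}$ is the single-point interval $\set{q}$ and equals $\bz_i^{(k+1)}$ otherwise. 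In either case I would argue that $\bx_i^{(k+1)}$ lies in the convex piece on the far side of $q$ from $\cR_{m_i}$, and that $f$ evaluated there is bounded above by the one-sided limit of $f$ at $q$ from the $m_i$ side, up to a controllable linear error — this is exactly where the surrogate construction \eqref{eq:surrogate-right-endpoint}/\eqref{eq:surrogate-left-endpoint} is used, since $f_{m_i}$ agrees with $f$ on $\cR_{m_i}$ and extends it with the appropriate one-sided slope $v^{\pm}$, whose magnitude is bounded by $F_0$ via Assumption~\ref{assumption:main}(d).

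Next I would quantify the relevant distances. The key geometric point is that $q \in [\bw_i^{(k)}, \bz_i^{(k+1)}]$ (or the reversed interval), so $\abth{\bz_i^{(k+1)} - q} \le \abth{\bz_i^{(k+1)} - \bw_i^{(k)}} = d_{i,0}$, and I would bound $d_{i,0}$ by the proximal-gradient step length: $\bz_i^{(k+1)} = \prox_{s f_{m_i}}(\bth{\bw^{(k)} - s\nabla g(\bw^{(k)})}_i)$, so by nonexpansiveness of the prox and the bound $\abth{[\nabla g(\bw^{(k)})]_i} \le \ltwonorm{\nabla g(\bw^{(k)})} \le G$ together with the bounded subdifferential bound $F_0$ on $f_{m_i}$, one gets $d_{i,0} \le s(G + F_0)$. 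Combining, the linear-extension error incurred in passing from the surrogate value $f_{m_i}(\bz_i^{(k+1)})$ to $f(\bx_i^{(k+1)})$ is at most (a constant times) $F_0 \cdot d_{i,0} \le sF_0(G+F_0)$ on each side, and the jump itself contributes $-J$, yielding $f(\bx_i^{(k+1)}) \le f_{m_i}(\bz_i^{(k+1)}) - J + 2sF_0(G+F_0) = f_{m_i}(\bz_i^{(k+1)}) - \kappa_2$. The step-size hypothesis $s < \frac{J}{2F_0(G+F_0)}$ is precisely what makes $\kappa_2 > 0$, and $s < \frac{s_0}{G+F_0}$ ensures the step stays within the differentiability window $(q-s_0,q)\cup(q,q+s_0)$ where the surrogate slopes and subdifferential bounds are valid.

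The main obstacle I anticipate is the careful bookkeeping of cases: one must separately handle (i) whether $q$ is a left or right endpoint of $\cR_{m_i}$, (ii) whether $f$ is only left- or only right-continuous at $q$ — which determines which branch of \eqref{eq:surrogate-right-endpoint}/\eqref{eq:surrogate-left-endpoint} is active and in which direction the jump $J$ helps versus hurts — and (iii) whether $\cR_{P(\bz_i^{(k+1)})}$ is a single point, forcing $\bz'_i = q$, versus a genuine interval where $\bx_i^{(k+1)} = \bz_i^{(k+1)}$ lands strictly inside the new piece. In the single-point case one must additionally check that landing exactly at $q$ (where $f$ takes its lower, post-jump value by the lower-semicontinuity and one-sided-continuity conventions in Definition~\ref{definition::piecewise-convex}) still obeys the bound, which it does because $f(q)$ differs from the relevant one-sided limit by exactly the jump. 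I would organize the argument so that each case reduces to the same two ingredients — a jump of size at least $J$ downward and a linear error of size at most $sF_0(G+F_0)$ per side — so that the final inequality \eqref{eq:Hamiltonian-descrease-jump-point-down} is uniform across cases.
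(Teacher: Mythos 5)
There is a genuine gap: your argument assumes that whenever the iterate crosses a discontinuous endpoint $q = q(\bw_i^{(k)})$, the jump of size at least $J$ is available ``downward'' in the direction of motion, and you relegate the opposite situation to case bookkeeping (``in which direction the jump $J$ helps versus hurts''). But the hurting case cannot be absorbed by bookkeeping --- it must be \emph{ruled out}, and that is the heart of the paper's proof. Concretely, suppose $q$ is the right endpoint of $\cR_{m_i}$ and $f$ is only left continuous there, so $f(q) < \lim_{y \to q^+} f(y)$: the jump is upward as you leave $\cR_{m_i}$, the surrogate takes the third branch of (\ref{eq:surrogate-right-endpoint}) and is constant equal to $\lim_{y \to q^+} f(y)$ on $\cR_{m_i}^+$, and $f(\bx_i^{(k+1)})$ for $\bx_i^{(k+1)} \in (q, q+s_0)$ differs from that constant only by an $O\pth{sF_0(G+F_0)}$ term. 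In that scenario $f(\bx_i^{(k+1)}) \approx f_{m_i}(\bz_i^{(k+1)})$ and the claimed decrease by $\kappa_2 > 0$ is simply false; the lemma only survives because such a crossing cannot occur. The paper proves this impossibility in Lemma~\ref{lemma:Hamiltonian-descrease-jump-point-up} by a contradiction based on the optimality of the proximal step: if $\bz_i^{(k+1)}$ landed beyond an upward jump, the prox objective $p(\bz_i^{(k+1)})$ would exceed $p(\bw_i^{(k)})$ once $s < \frac{J}{F_0 G + G^2/2}$, contradicting $p(\bz_i^{(k+1)}) \le p(\bw_i^{(k)})$. Your proposal contains no substitute for this step, so the case analysis you outline cannot be closed; only after the unfavorable direction is excluded does the remaining argument (bounding $\abth{\bz_i^{(k+1)} - q} \le \abth{\bz_i^{(k+1)} - \bw_i^{(k)}} \le s(G+F_0)$, comparing $f(\bx_i^{(k+1)}) \le f(q) + sF_0(G+F_0)$ with $f_{m_i}(\bz_i^{(k+1)}) \ge f(q) + J - sF_0(G+F_0)$) go through as you describe, including the single-point case $\bz'_i = q$.

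A secondary weakness: you justify $d_{i,0} \le s(G+F_0)$ by ``nonexpansiveness of the prox,'' but the surrogate $f_{m_i}$ in the discontinuous third-branch case is neither convex nor continuous (e.g.\ the surrogate $f_2$ of the indicator penalty), so its proximal map is not nonexpansive and can jump by $\sqrt{2\lambda s}$ rather than $O(s)$. The paper instead derives this bound from the first-order stationarity condition $\bz_i^{(k+1)} - \bw_i^{(k)} = -s\pth{[\nabla g(\bw^{(k)})]_i + f'_{m_i}(\bz_i^{(k+1)})}$, valid because in the (favorable) case that actually occurs the surrogate is the linear extension with slope bounded by $F_0$; this is another place where your argument implicitly relies on the direction-of-jump exclusion you have not proved.
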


\subsection{Proof of Lemma~\ref{lemma:Hamiltonian-descrease-continuous-point}}
\begin{proof}[\textup{\textbf{Proof of Lemma~\ref{lemma:Hamiltonian-descrease-continuous-point}}}]
Let $q = q(\bw_i^{(k)})$. We must have $\bx_i^{(k+1)} = \bz_i^{(k+1)}$ with the PPGD algorithm and the Negative-Curvature-Exploitation algorithm described in Algorithm~\ref{alg:ppgd} and Algorithm~\ref{alg:NCE}. Without loss of generality, we assume $\bx_i^{(k+1)} \in \cR_{m_i}^+$. The case that $\bx_i^{(k+1)} \in \cR_{m_i}^-$ can be proved in a similar manner.

We have
\bsal\label{eq:Hamiltonian-descrease-continuous-point-seg1}
f_{m_i}(\bx_i^{(k+1)})  = f(q) +v^- \pth{\bx_i^{(k+1)}-q}
\esal
by definition (\ref{eq:surrogate-right-endpoint}), where $v^- =\lim_{x \to q^-} f'(x) $.
On the other hand, we have
\bsal\label{eq:Hamiltonian-descrease-continuous-point-Z}
\bz_i^{(k+1)} - \bw_i^{(k)} = -s \pth{ \bth{\nabla g(\bw^{(k)})}_i + f'_{m_i}(\bz_i^{(k+1)}) } = -s \pth{ \bth{\nabla g(\bw^{(k)})}_i + v^- }.
\esal
It follows by (\ref{eq:Hamiltonian-descrease-continuous-point-Z}) that
\bsal\label{eq:Hamiltonian-descrease-continuous-point-Z-upperbound}
\abth{\bx_i^{(k+1)}-q} = \abth{\bz_i^{(k+1)}-q} \le \abth{\bz_i^{(k+1)}-\bw_i^{(k)}} = s \abth{ \bth{\nabla g(\bw^{(k)})}_i + v^- } \le s(G+F_0).
\esal
As a result, $s < \frac{s_0}{G+F_0}$
guarantees that $\bx_i^{(k+1)} \in (q,q+s_0)$. Therefore, we have
\bsal\label{eq:Hamiltonian-descrease-continuous-point-seg2}
f(\bx_i^{(k+1)}) \le f(x) + f'(\bx_i^{(k+1)}) \pth{\bx_i^{(k+1)} - x}
\esal
when $x \in (q,\bx_i^{(k+1)})$. Letting $x \to q$ in (\ref{eq:Hamiltonian-descrease-continuous-point-seg2}), we have
\bsal\label{eq:Hamiltonian-descrease-continuous-point-seg3}
f(\bx_i^{(k+1)}) \le f(q) + v^+ \pth{\bx_i^{(k+1)} - q},
\esal
where $v^+ =\lim_{x \to q^+} f'(x)$. By the negative curvature condition in Assumption~\ref{assumption:main}(d), we have $v^- - v^+ \ge C > 0$, therefore,
\bsal\label{eq:Hamiltonian-descrease-continuous-point-seg4}
f(\bx_i^{(k+1)}) &\le f_{m_i}(\bx_i^{(k+1)}) - C \pth{\bx_i^{(k+1)}-q}
\nonumber \\
&\le f_{m_i}(\bx_i^{(k+1)}) - C w_0 \pth{\bz_i^{(k+1)} - \bw_i^{(k)}}.
\esal
Because $\bz_i^{(k+1)} \in \cR_{m_i}^+$,
$\bz_i^{(k+1)} \ge \bw_i^{(k)}$. By (\ref{eq:Hamiltonian-descrease-continuous-point-seg4})
we have
\bsal\label{eq:Hamiltonian-descrease-continuous-point-seg4-post}
f(\bx_i^{(k+1)}) \le f_{m_i}(\bx_i^{(k+1)}) = f_{m_i}(\bz_i^{(k+1)}).
\esal

Let $\tilde \bw \in \RR^d$ with $\tilde \bw_j = \bw_j^{(k)}$ for all $j \neq i$, and $\tilde \bw_i = q$. By the definition of $\tilde \bw$ and (\ref{eq:Hamiltonian-descrease-continuous-point-Z}), we have
\bsal\label{eq:Hamiltonian-descrease-continuous-point-seg5}
\ltwonorm{\bw^{(k)} - \tilde \bw}  &= \abth{\bw_i^{(k)}-q} \le (1-w_0)
\abth{\bz_i^{(k+1)} - \bw_i^{(k)}} \nonumber \\
&=  s(1-w_0) \abth{\bth{\nabla g(\bw^{(k)})}_i + v^-} \le s(1-w_0) (G+F_0).
\esal

We have
\bsal\label{eq:Hamiltonian-descrease-continuous-point-seg6}
\bz_i^{(k+1)} - \bw_i^{(k)} &= \abth{\bz_i^{(k+1)} - \bw_i^{(k)}} =
s \abth{[\nabla g(\bw^{(k)})]_i + v^- } \nonumber \\
&=s \abth{\bth{\nabla g(\bw^{(k)}) - \nabla g(\tilde \bw)}_i + \bth{\nabla g(\tilde \bw)}_i  + v^- } \nonumber \\
&\ge s \pth{ \eps_0 - L_g \ltwonorm{\bw^{(k)} - \tilde \bw} } \nonumber \\
&\ge s \pth{ \eps_0 - sL_g (1-w_0) (G+F_0) },
\esal
where the last inequality follows from
(\ref{eq:Hamiltonian-descrease-continuous-point-seg5}).

It follows by (\ref{eq:Hamiltonian-descrease-continuous-point-seg4}) and
(\ref{eq:Hamiltonian-descrease-continuous-point-seg6}) that
\bsal\label{eq:Hamiltonian-descrease-continuous-point-seg7}
f(\bx_i^{(k+1)}) &\le f_{m_i}(\bx_i^{(k+1)}) - s C w_0 \pth{ \eps_0 - sL_g (1-w_0) (G+F_0) }.
\esal
Noting that $\bx_i^{(k+1)} = \bz_i^{(k+1)}$ in (\ref{eq:Hamiltonian-descrease-continuous-point-seg7}) completes the proof.
\end{proof}

\subsection{Proof of Lemma~\ref{lemma:Hamiltonian-descrease-jump-point-down}}

Before presenting the proof, we introduce and prove the following lemma.

\begin{lemma}
\label{lemma:Hamiltonian-descrease-jump-point-up}
Suppose Assumption~\ref{assumption:main} holds,
$\ltwonorm{\bw^{(k)}} \le G$, and $s < \min\set{\frac{J}{F_0G+G^2/2},\frac{s_0}{G}}$, $P(\bx_i^{(k+1)}) \neq P(\bx_i^{(k)})=m_i$ for some $k \ge 0$ and some $i \in [d]$, and $f$ is not continuous at $q(\bw_i^{(k)})$.
Then $f(q(\bw_i^{(k)})) < \lim_{y \to q(\bw_i^{(k)})^-} f(y)$ if $q(\bw_i^{(k)})$ is the right endpoint of $\cR_{m_i}$ for $1 \le m_i < M$, and $f(q(\bw_i^{(k)})) < \lim_{y \to q(\bw_i^{(k)})^+} f(y)$ if $q(\bw_i^{(k)})$ is the left endpoint of $\cR_{m_i}$ for $1 < m_i \le M$.
\end{lemma}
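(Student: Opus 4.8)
The plan is to argue by contradiction, working one coordinate $i$ at a time and treating the case that $q \defeq q(\bw_i^{(k)})$ is the right endpoint $q_{m_i}$ of $\cR_{m_i}$ (so $m_i < M$); the left-endpoint case is the mirror image. So I will assume, against the claim, that $f(q) \ge \lim_{y\to q^-} f(y)$. The first step is to notice that this assumption rigidifies the local picture of $f$ and of the surrogate $f_{m_i}$: lower semicontinuity gives $f(q) \le \liminf_{y\to q} f(y) \le \lim_{y\to q^-} f(y)$, so equality holds and $f$ is left continuous at $q$; since $f$ is not continuous at $q$ it cannot be right continuous, so lower semicontinuity forces $f(q) < \lim_{y\to q^+} f(y) \eqqcolon f(q^+)$, and the definition (\ref{eq:jump-value}) of $J$ then yields $f(q^+) - f(q) \ge J$. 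Left continuity at $q$ also places $q \in \cR_{m_i}$ and puts $f_{m_i}$ into the third branch of (\ref{eq:surrogate-right-endpoint}): $f_{m_i} \equiv f(q^+)$ on $\cR_{m_i}^+ = (q,\infty)$.

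Next I would locate $\bz_i^{(k+1)}$. Since $\bw^{(k)} = \bP_{\bx^{(k)},R_0}(\bu^{(k)})$ is a projection onto $\overline{\cR_{m_i}} \cap B(\bx_i^{(k)},R_0)$ by (\ref{eq:convex-piece-projection}), we have $\bw_i^{(k)} \in \overline{\cR_{m_i}}$, hence $\bw_i^{(k)} \le q$; since $P(\bz_i^{(k+1)}) \neq m_i$ (so $\bz_i^{(k+1)} \notin \cR_{m_i}$), $q \in \cR_{m_i}$, and $q$ lies on the segment joining $\bw_i^{(k)}$ and $\bz_i^{(k+1)}$ by the definition of $q(\bw_i^{(k)})$, it follows that $\bz_i^{(k+1)} > q$, i.e. $\bz_i^{(k+1)} \in \cR_{m_i}^+$. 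Writing $y_i \defeq [\bw^{(k)} - s\nabla g(\bw^{(k)})]_i$, the point $\bz_i^{(k+1)} = \prox_{s f_{m_i}}(y_i)$ is a global minimizer of $\phi(v) \defeq \frac{1}{2s}(v-y_i)^2 + f_{m_i}(v)$; on the open interval $(q,\infty)$ the function $f_{m_i}$ is the constant $f(q^+)$, so there $\phi$ is smooth and strictly convex, and $\bz_i^{(k+1)}$ being an interior minimizer forces $y_i > q$ and $\bz_i^{(k+1)} = y_i$. Consequently $|q - y_i| \le |\bw_i^{(k)} - y_i| = s\,|[\nabla g(\bw^{(k)})]_i| \le sG$, using $\bw_i^{(k)} \le q < y_i$ and $\ltwonorm{\nabla g(\bw^{(k)})} \le G$.

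The contradiction then drops out of the optimality of the proximal step: comparing $\phi$ at its minimizer $y_i$ with its value at $q$, and using $f_{m_i}(q) = f(q)$ since $q \in \cR_{m_i}$,
\[
f(q^+) = \phi(y_i) \le \phi(q) = \tfrac{1}{2s}(q-y_i)^2 + f(q) \le \tfrac{sG^2}{2} + f(q),
\]
so $J \le f(q^+) - f(q) \le sG^2/2$, i.e. $s \ge 2J/G^2$, which contradicts $s < J/(F_0 G + G^2/2) \le 2J/G^2$. The left-endpoint case is obtained by reflecting the whole argument, with $\lim_{y\to q^-}$ replaced by $\lim_{y\to q^+}$ and ``left'' and ``right continuous'' swapped. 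I expect the only genuinely nonroutine part to be the structural observation at the start — that negating the conclusion is precisely what forces the surrogate onto its flat third branch on the piece that $\bz^{(k+1)}$ allegedly moves to — together with the first-order optimality argument that pins $\bz_i^{(k+1)} = y_i$; everything after that is the elementary estimate above, and in fact only $s < 2J/G^2$ is used, so the $F_0 G$ and $s_0/G$ terms in the stated step-size bound are slack here (inherited from what the companion lemmas need).
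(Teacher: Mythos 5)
Your proof is correct and follows the same skeleton as the paper's: argue by contradiction, note that the negated conclusion together with lower semicontinuity forces $f$ to be only left continuous at $q$ so that $f_{m_i}$ takes the constant third branch $\lim_{y \to q^+}f(y)$ on $(q,\infty)$, use prox optimality (Fermat's rule) to pin $\bz_i^{(k+1)} = \bth{\bw^{(k)} - s\nabla g(\bw^{(k)})}_i$, and then compare objective values of the prox subproblem to contradict the step-size bound. The one genuine difference is the comparison point: the paper compares $p(\bz_i^{(k+1)})$ with $p(\bw_i^{(k)})$, which requires bounding $\abth{f(\bw_i^{(k)}) - f(q)} \le F_0 \abth{\bw_i^{(k)} - q}$ via the bounded Fr\'echet subdifferential and the condition $s < s_0/G$ (to keep $\bw_i^{(k)}$ in the interval where that bound applies), whereas you compare $\phi$ at its minimizer with $\phi(q)$ directly, using only $\abth{q - y_i} \le sG$; this eliminates the $F_0 G$ term and the $s_0/G$ constraint, so your contradiction needs only $s < 2J/G^2$, a strictly weaker requirement than the stated hypothesis (which you correctly flag as slack inherited from the companion lemmas). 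Two shared caveats, neither a gap relative to the paper: the hypothesis $\ltwonorm{\bw^{(k)}} \le G$ is evidently a typo for $\ltwonorm{\nabla g(\bw^{(k)})} \le G$, which is what you (and the paper's own proof) use; and both you and the paper implicitly assume $\bz_i^{(k+1)} \in (q,\infty)$ in the right-endpoint case, glossing over the degenerate configuration $\bw_i^{(k)} = q$ with $\bz_i^{(k+1)}$ exiting on the left.
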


\begin{proof}[\textup{\textbf{
Proof of Lemma~\ref{lemma:Hamiltonian-descrease-jump-point-up}}}]
We prove the case when $q(\bw_i^{(k)})$ is the right endpoint of $\cR_{m_i}$. The case when  $q(\bw_i^{(k)})$ is the left endpoint of $\cR_{m_i}$ can be proved in a similar manner.

Suppose the claimed result $f(q(\bw_i^{(k)})) < \lim_{y \to q(\bw_i^{(k)})^-} f(y)$ does not hold, so we must have

\noindent $\lim_{y \to q(\bw_i^{(k)})^+}  f(y)>  f(q(\bw_i^{(k)}))$.
In this case, $\bx_i^{(k+1)} = \bz_i^{(k+1)}$, and
$\bz_i^{(k+1)} \in (q(\bw_i^{(k)}),\infty)$.

\noindent Define $p(v) \defeq \frac{1}{2s}\pth{v - \bth{\bw^{(k)} - s \nabla g(\bw^{(k)}}_i}^2 + f_{m_i}(v)$. We have $$\bz_i^{(k+1)} = \prox_{s f_{P(\bx_i^{(k)})}} \pth{\bth{\bw^{(k)} - s \nabla g(\bw^{(k)}}_i}.$$
The optimality of $\bz_i^{(k+1)}$ by Fermat's rule indicates that $\bz_i^{(k+1)} = \bth{\bw^{(k)} - s \nabla g(\bw^{(k)}}_i$, and
\bsal\label{eq:Hamiltonian-descrease-jump-point-up-seg1}
p(\bz_i^{(k+1)}) \le p(\bw_i^{(k)}).
\esal
We have $\bw_i^{(k)} \in (q(\bw_i^{(k)})-s_0,q(\bw_i^{(k)}))$ due to $s < \frac{s_0}{G}$.

On the other hand, we have
\bsal\label{eq:Hamiltonian-descrease-jump-point-up-seg2}
p(\bw_i^{(k)}) = \frac s2 \nabla \bth{g(\bw^{(k)})}_i^2 + f_{m_i}(\bw_i^{(k)}),
\quad
p(\bz_i^{(k+1)}) \ge f_{m_i}(\bz_i^{(k+1)}) = \lim_{y \to q(\bw_i^{(k)})^+} f(y),
\esal
and
\bsal\label{eq:Hamiltonian-descrease-jump-point-up-seg3}
\abth{f_{m_i}(\bw_i^{(k)}) - f_{m_i}(q(\bw_i^{(k)}))}
&= \abth{f(\bw_i^{(k)}) - f(q(\bw_i^{(k)}))}
\le F \abth{\bw_i^{(k)}-q(\bw_i^{(k)})} \nonumber \\
&\le F \abth{\bw_i^{(k)}- \bz_i^{(k+1)}} \le sF_0 \bth{\nabla g(\bw^{(k)})}_i \le sF_0G.
\esal
(\ref{eq:Hamiltonian-descrease-jump-point-up-seg3}) follows because .

It follows by (\ref{eq:Hamiltonian-descrease-jump-point-up-seg2}) and
(\ref{eq:Hamiltonian-descrease-jump-point-up-seg3}) that

\bsal\label{eq:Hamiltonian-descrease-jump-point-up-seg4}
p(\bz_i^{(k+1)}) &\ge f(q(\bw_i^{(k)})) + J \stackrel{\circled{1}}{>}
f_{m_i}(q(\bw_i^{(k)}))  + sF_0G+ \frac s2 G^2 \nonumber \\
&\ge f_{m_i}(\bw_i^{(k)})+ \frac s2 G^2 \nonumber \\
&\ge p(\bw_i^{(k)}),
\esal
where $\circled{1}$ follows from $s < \frac{J}{F_0G+G^2/2}$. This contradiction shows that we must have
$\lim_{y \to q(\bw_i^{(k)})^+}  f(y) \le f(q(\bw_i^{(k)}))$. Since $f$ is not continuous at $q(\bw_i^{(k)})$, we must have $f(q(\bw_i^{(k)})) < \lim_{y \to q(\bw_i^{(k)})^-} f(y)$.
\end{proof}

\begin{proof}[\textup{\textbf{
Proof of Lemma~\ref{lemma:Hamiltonian-descrease-jump-point-down}}}]
According to Lemma~\ref{lemma:Hamiltonian-descrease-jump-point-up}, the following claim holds: when $q(\bw_i^{(k)})$ is a right endpoint of  $\cR_{m_i}$, then $f(q(\bw_i^{(k)})) < \lim_{y \to q(\bw_i^{(k)})^-} f(y)$; when $q(\bw_i^{(k)})$  is a left endpoint of $\cR_{m_i}$, then $f(q(\bw_i^{(k)})) < \lim_{y \to q(\bw_i^{(k)})^+} f(y)$.

Let $q = q(\bw_i^{(k)})$. Without loss of generality, we assume $\bx_i^{(k+1)} \in \cR_{m_i}^+$. The case that $\bx_i^{(k+1)} \in \cR_{m_i}^-$ can be proved in a similar manner.

We first consider the case that $\cR_{m_i+1}$ is not a single-point set, that is, $\cR_{m_i+1} \neq \set{q}$.

According to the definition of surrogate function (\ref{eq:surrogate-right-endpoint}), we have
\bsal\label{eq:Hamiltonian-descrease-jump-point-down-seg1}
f_{m_i}(\bx_i^{(k+1)}) = \lim_{y \to q^-} f(y)+v^- (\bx_i^{(k+1)}-q),
\esal
where $v^- =\lim_{x \to q^-} f'(x) $. If $\bx_i^{(k+1)} \neq q$, applying the argument in (\ref{eq:Hamiltonian-descrease-continuous-point-Z-upperbound}) in the proof of Lemma~\ref{lemma:Hamiltonian-descrease-continuous-point}, $s < \frac{s_0}{G+F_0}$ guarantees that
$\bx_i^{(k+1)} \in (q,q+s_0)$. So we have
\bsal\label{eq:Hamiltonian-descrease-jump-point-down-seg2}
f(\bx_i^{(k+1)}) \le f(x) + f'(\bx_i^{(k+1)}) \pth{\bx_i^{(k+1)} - x}
\esal
when $x \in (q,\bx_i^{(k+1)})$. Letting $x \to q$ in (\ref{eq:Hamiltonian-descrease-jump-point-down-seg2}), we have
\bsal\label{eq:Hamiltonian-descrease-jump-point-down-seg3}
f(\bx_i^{(k+1)}) \le f(q) + v^+ \pth{\bx_i^{(k+1)} - q},
\esal
where $v^+ =\lim_{x \to q^+} f'(x)$. In addition,

It follows by (\ref{eq:Hamiltonian-descrease-jump-point-down-seg3})
and (\ref{eq:Hamiltonian-descrease-continuous-point-Z-upperbound}) in the proof of Lemma~\ref{lemma:Hamiltonian-descrease-continuous-point} that
\bsal\label{eq:Hamiltonian-descrease-jump-point-down-seg5}
f(\bx_i^{(k+1)}) \le f(q) + sF_0(G+F_0).
\esal
We note that (\ref{eq:Hamiltonian-descrease-jump-point-down-seg5}) holds for all $\bx_i^{(k+1)} \in [q,q+s_0)$
Moreover, it follows by (\ref{eq:Hamiltonian-descrease-jump-point-down-seg1}) and  (\ref{eq:Hamiltonian-descrease-continuous-point-Z-upperbound}) in the proof of Lemma~\ref{lemma:Hamiltonian-descrease-continuous-point} that
\bsal\label{eq:Hamiltonian-descrease-jump-point-down-seg6}
f_{m_i}(\bx_i^{(k+1)}) \ge \lim_{y \to q^-} f(y) -\abth{ v^- (\bx_i^{(k+1)}-q)}
\ge\lim_{y \to q^-} f(y)- sF_0(G+F_0) \ge f(q)+J - sF_0(G+F_0).
\esal
Combining (\ref{eq:Hamiltonian-descrease-jump-point-down-seg5})
and  (\ref{eq:Hamiltonian-descrease-jump-point-down-seg6}), we have
\bsal\label{eq:Hamiltonian-descrease-jump-point-down-seg7}
f(\bx_i^{(k+1)}) \le f_{m_i}(\bx_i^{(k+1)})  - \pth{J - 2sF_0(G+F_0)}
= f_{m_i}(\bz_i^{(k+1)})  - \pth{J - 2sF_0(G+F_0)}.
\esal

If $\cR_{P(\bz_i^{(k+1)})} = \set{q}$, then $\bx_i^{(k+1)} = q$ and
$f(\bx_i^{(k+1)}) = f(q)$. By the fact that
$$f_{m_i}(\bz_i^{(k+1)}) = \lim_{y \to q^-} f(y) + v^- (\bz_i^{(k+1)}-q)$$
and $\abth{\bz_i^{(k+1)}-q} \le \abth{\bz_i^{(k+1)} - \bw_i^{(k)}}$, following
the argument similar to (\ref{eq:Hamiltonian-descrease-jump-point-down-seg6}) we have
\bsals
f_{m_i}(\bz_i^{(k+1)}) \ge f(q)+J - sF_0(G+F_0) = f(\bx_i^{(k+1)})+J - sF_0(G+F_0).
\esals
It follows that
\bsal\label{eq:Hamiltonian-descrease-jump-point-down-seg8}
f(\bx_i^{(k+1)}) \le f_{m_i}(\bz_i^{(k+1)})- \pth{J - sF_0(G+F_0)}.
\esal
The proof is completed by combining (\ref{eq:Hamiltonian-descrease-jump-point-down-seg7}) and  (\ref{eq:Hamiltonian-descrease-jump-point-down-seg8}).
\bsals
\esals
\end{proof}

\subsection{Proof of Lemma~\ref{lemma:decrease-convex-pieces-change}}

\begin{proof}[\textup{\textbf{Proof of Lemma~\ref{lemma:decrease-convex-pieces-change}}}]
We split $[d]$ into three disjoint subsets, $[d] = S_1 \cup S_2 \cup S_3$, and the three subsets are defined by
\bsal\label{Hamiltonian-descrease-continuous-point-setsdef}
S_1 &\defeq \set{i \in [d] \colon P(\bx_i^{(k+1)}) \neq P(\bx_i^{(k)}), f \textup{ is  continuous at } q(\bw_i^{(k)}) } \nonumber \\
S_2 &\defeq \set{i \in [d] \colon P(\bx_i^{(k+1)}) \neq P(\bx_i^{(k)}), f \textup{ is not continuous at } q(\bw_i^{(k)}) }, \nonumber \\
S_3 &\defeq \set{i \in [d] \colon P(\bx_i^{(k+1)}) = P(\bx_i^{(k)})}.
\esal

Let $m_i = P(\bx_i^{(k)})$. According to Lemma~\ref{lemma:Hamiltonian-descrease-continuous-point}, for all $i \in S_1$ such that $d_{i,1} \ge w_0 d_{i,0}$, we have
\bsal\label{eq:decrease-convex-pieces-change-seg1-1}
f(\bx_i^{(k+1)}) \le f_{m_i}(\bz_i^{(k+1)}) - \kappa_1.
\esal
In addition, for all $i \in S_1$, we have
\bsal\label{eq:decrease-convex-pieces-change-seg1-2}
f(\bx_i^{(k+1)}) \le f_{m_i}(\bz_i^{(k+1)}).
\esal

According to Lemma~\ref{lemma:Hamiltonian-descrease-jump-point-down},
\bsal\label{eq:decrease-convex-pieces-change-seg2}
f(\bx_i^{(k+1)}) \le f_{m_i}(\bz_i^{(k+1)}) - \kappa_2.
\esal
for all $i \in S_2$.

For all $i \in S_3$, we have
\bsal\label{eq:decrease-convex-pieces-change-seg3}
f(\bx_i^{(k+1)}) = f_{m_i}(\bx_i^{(k+1)}) = f_{m_i}(\bz_i^{(k+1)}).
\esal
The Negative-Curvature-Exploitation algorithm described in Algorithm~\ref{alg:NCE}
guarantees that when $P(\bx^{(k+1)}) \neq P(\bx^{(k)})$, there exists at lease one $i \in S_1 \cup S_2$ such that (\ref{eq:decrease-convex-pieces-change-seg1-1})
or (\ref{eq:decrease-convex-pieces-change-seg2}) holds. It follows by
(\ref{eq:decrease-convex-pieces-change-seg1-1})-(\ref{eq:decrease-convex-pieces-change-seg3}) and the above argument that
\bsal\label{eq:decrease-convex-pieces-change-seg4}
\sum_{i=1}^d f(\bx_i^{(k+1)}) \le \sum_{i=1}^d f_{m_i}(\bz_i^{(k+1)}) -
\kappa_0.
\esal

Let $\bar S_{k+1} \defeq \set{i \in [d] \colon \bx_i^{(k+1)} \neq
\bz_i^{(k+1)} }$. It can be verified by Algorithm~\ref{alg:NCE}
that $\bar S_{k+1} \subseteq S_2$. If $\bar S_{k+1} = \emptyset$, then
$\bx^{(k+1)} = \bz^{(k+1)}$. It follows from (\ref{eq:decrease-convex-pieces-change-seg4}) that
\bsal\label{eq:decrease-convex-pieces-change-seg5}
F(\bx^{(k+1)}) \le g(\bx^{(k+1)})+ \sum_{i=1}^d f_{m_i}(\bz_i^{(k+1)}) - \kappa =F_{P(\bx^{(k)})}(\bz^{(k+1)})-\kappa_0.
\esal

If $\bar S_{k+1} \neq \emptyset$, then it is possible that $\bx^{(k+1)} \neq \bz^{(k+1)}$. To handle the case that $\bx^{(k+1)} \neq \bz^{(k+1)}$,
we first bound $\ltwonorm{\bx^{(k+1)}-\bz^{(k+1)}}$. Define $h_{\bm}(\bx) \defeq \sum_{i=1}^d f_{\bm_i}(\bx_i)$ for $\bm \in \NN^d$ and $\bm_i \in m_i = P(\bx_i^{(k)})$ for all $i \in [d]$. By the optimality of $\bz^{(k+1)}$, we have
\bsal\label{eq:decrease-convex-pieces-change-seg6-pre}
\frac{1}{s}(\bw^{(k)} - \bz^{(k+1)}) - \nabla g(\bw^{k}) \in \tpartial h_{\bm}(\bw^{k}).
\esal

It follows from (\ref{eq:decrease-convex-pieces-change-seg6-pre})
that $\ltwonorm{\bw^{(k)} - \bz^{(k+1)}} \le s(G+\sqrt{d} F_0)$, and
$\ltwonorm{\bx^{(k+1)}-\bz^{(k+1)}} \le \ltwonorm{\bw^{(k)} - \bz^{k+1}}
\le s(G+\sqrt{d} F_0)$. We then bound $\abth{g(\bx^{(k+1)}) - g(\bz^{(k+1)})}$
by
\bsal\label{eq:decrease-convex-pieces-change-seg6}
\abth{g(\bx^{(k+1)}) - g(\bz^{(k+1)})} &\stackrel{\circled{1}}{=}
\abth{\iprod{\nabla g(\zeta)}{\bx^{(k+1)}-\bz^{(k+1)}}}
\le \ltwonorm{\nabla g(\zeta)} \ltwonorm{\bx^{(k+1)}-\bz^{(k+1)}}
\nonumber \\
&\le
\ltwonorm{\nabla g(\zeta) - \nabla g(\bw^{(k)}) + \nabla g(\bw^{(k)})}
\cdot s(G+\sqrt{d} F_0) \nonumber \\
&\stackrel{\circled{2}}{\le} \pth{L_g\ltwonorm{\zeta-\bw^{(k)}} + G}\cdot s(G+\sqrt{d} F_0)
\nonumber \\
&\le s \pth{sL_g(G+\sqrt{d} F_0) + G}(G+\sqrt{d} F_0).
\esal

Here $\zeta$ in $\circled{1}$ lies in the line segment between $\bx^{(k+1)}$ and
$\bz^{(k+1)}$ by the mean value theorem of differentiable functions.
$\circled{2}$ follows from the fact that $\ltwonorm{\zeta-\bw^{(k)}}
\le \ltwonorm{\bw^{(k)}-\bz^{(k+1)}} \le s(G+\sqrt{d} F_0)$. We then have
\bsal\label{eq:decrease-convex-pieces-change-seg7}
F(\bx^{(k+1)}) &\le g(\bx^{(k+1)})+ \sum_{i=1}^d f_{m_i}(\bz_i^{(k+1)}) - \kappa_0 \nonumber \\
&\le g(\bz^{(k+1)}) + \sum_{i=1}^d f_{m_i}(\bz_i^{(k+1)}) + \pth{g(\bx^{(k+1)}) - g(\bz^{(k+1)})} - \kappa_0 \nonumber \\
&\le F_{P(\bx^{(k)})}(\bz^{(k+1)}) -
\pth{\kappa_0 - s \pth{sL_g(G+\sqrt{d} F_0) + G}(G+\sqrt{d} F_0) }\nonumber \\
&= F_{P(\bx^{(k)})}(\bz^{(k+1)}) - \kappa.
\esal

The PPGD algorithm guarantees that
\bsal\label{eq:decrease-convex-pieces-change-seg8}
F_{P(\bx^{(k)})}(\bz^{(k+1)}) \le F(\bx^{(k)}).
\esal

It follows from (\ref{eq:decrease-convex-pieces-change-seg5}),
(\ref{eq:decrease-convex-pieces-change-seg7}), and
(\ref{eq:decrease-convex-pieces-change-seg8}) that
\bsals
F(\bx^{(k+1)}) \le F(\bx^{(k)}) - \pth{\kappa_0 - s \pth{sL_g(G+\sqrt{d} F_0) + G}(G+\sqrt{d} F_0) }.
\esals

Since $s < \frac{-G+\sqrt{G^2 + \frac{4A\kappa_0}{G+\sqrt{d} F_0}}}{2A}$,
we have $\kappa > 0$, which completes the proof.
\end{proof}

\subsection{Proof of Lemma~\ref{lemma:bounded-gradient}}

\begin{proof}[\textup{\textbf{
Proof of Lemma~\ref{lemma:bounded-gradient}}}]

The PPGD algorithm described in Algorithm~\ref{alg:ppgd} ensures that
$F(\bx^{(k)}) \le F(\bx^{(k-1)})$ for $k = 1$, and $\ltwonorm{\nabla g(\bw^{(k)})} \le G$ for $k = 1$.

Suppose that $F(\bx^{(k)}) \le F(\bx^{(k-1)})$ and $\ltwonorm{\nabla g(\bw^{(k)})} \le G$ hold for all $1 \le k \le k'$ with $k' \ge 1$. With the chosen step size and the proof of
Lemma~\ref{lemma:decrease-convex-pieces-change}, we have
$F(\bx^{(k'+1)}) \le F(\bx^{(k')})$. This indicates that $\bx^{(k'+1)} \in \cL$ and $\bw^{(k'+1)} \in \cL_{R_0}$, so $\ltwonorm{\bw^{(k'+1)}} \le G$. It follows by induction that $F(\bx^{(k)}) \le F(\bx^{(k-1)})$ and $\ltwonorm{\nabla g(\bw^{(k)})} \le G$ hold for all $k \ge 1$.
\end{proof}

\subsection{Proof of Theorem~\ref{theorem:decrease-convex-pieces-change}}

\begin{proof}[\textup{\textbf{Proof of Theorem~\ref{theorem:decrease-convex-pieces-change}}}]
By Lemma~\ref{lemma:bounded-gradient}, $\ltwonorm{\nabla g(\bw^{(k)})} \le G$ holds for all $k \ge 1$. Then the conclusion of this theorem directly follows from
Lemma~\ref{lemma:decrease-convex-pieces-change}.

\end{proof}

\subsection{Proof of Theorem~\ref{theorem:ppgd-convergence}}
\label{sec:proof-main-theorem}
The following lemma is crucial in the proof of
Theorem~\ref{theorem:ppgd-convergence}.
It shows that after sufficient iterations, all the coordinates of $\bx^{(k)}$ belong to the same the convex pieces indexed by $\bm^*$, that is, $P(\bx^{(k)}) = \bm^*$. Moreover, the objective value $F(\bx^{(k)})$ is not greater than the surrogate objective value $F_{\bm^*}(\bz^{(k)}) =  g(\bz^{(k)}) +
\sum_{i=1}^d f_{\bm_i^*}(\bz_i^{(k)})$.

\begin{lemma}\label{lemma:fx-fz}
Suppose Assumption~\ref{assumption:main} and Assumption~\ref{assumption:nonvanishing-gradient-continuous-endpoints} hold, and $s < \min\set{s_1,\frac{\eps_0}{L_g (G+\sqrt{d}F_0)}}$. Then there must exists a finite $\bar k \in \NN$ such that $P(\bx^{(k)}) = \bm^* \in \NN^d$ and $F(\bx^{(k)})  \le F_{\bm^*}(\bz^{(k)})$ for all $k \ge \bar k$.
\end{lemma}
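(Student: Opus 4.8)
This statement has two parts: that $P(\bx^{(k)})$ is eventually constant at some $\bm^*$, and that from then on $F(\bx^{(k)})$ is dominated by the surrogate value $F_{\bm^*}(\bz^{(k)})$. The first part is short. By Lemma~\ref{lemma:bounded-gradient} the sequence $\set{F(\bx^{(k)})}$ is nonincreasing, and by Assumption~\ref{assumption:main}(a) it is bounded below; since $s<s_1$, Theorem~\ref{theorem:decrease-convex-pieces-change} shows that every index change $P(\bx^{(k+1)})\neq P(\bx^{(k)})$ forces a drop $F(\bx^{(k+1)})\le F(\bx^{(k)})-\kappa$ with $\kappa>0$ fixed. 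A sequence bounded below can drop by $\kappa$ only finitely many times, so there is a finite $k_1\ge 1$ with $P(\bx^{(k)})=P(\bx^{(k_1)})=:\bm^*$ for all $k\ge k_1$.

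For the second part I would fix $k>k_1$, so that $P(\bx^{(k)})=P(\bx^{(k-1)})=\bm^*$, and trace the single PPGD iteration that produced $\bx^{(k)}$ from $\bx^{(k-1)}$ and $\bz^{(k)}$, splitting on the acceptance test $F_{\bm^*}(\bz^{(k)})\le F(\bx^{(k-1)})$ of Algorithm~\ref{alg:ppgd}. If the test fails, then $\bx^{(k)}=\bx^{(k-1)}$, hence $F(\bx^{(k)})=F(\bx^{(k-1)})<F_{\bm^*}(\bz^{(k)})$. If the test holds and $P(\bz^{(k)})=\bm^*$, then Algorithm~\ref{alg:NCE} returns $\bx^{(k)}=\bz^{(k)}$ at its first line, and since $f_m$ agrees with $f$ on $\cR_m$ this gives $F(\bx^{(k)})=F_{\bm^*}(\bz^{(k)})$. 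The remaining possibility — the test holds but $P(\bz^{(k)})\neq\bm^*$ — is the delicate one: since $k>k_1$ no piece change may occur, which forces the Negative-Curvature-Exploitation flag to be false and hence $\bx^{(k)}=\bx^{(k-1)}$; but then the target inequality reads $F(\bx^{(k-1)})\le F_{\bm^*}(\bz^{(k)})$, which contradicts the test unless it holds with equality. So the first two cases already give what we want, and the whole burden is to show this last configuration occurs for only finitely many $k$; $\bar k$ is then any index past $k_1$ and past the last occurrence.

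I expect this exclusion to be the main obstacle. The structural facts I would use are: (a) when the flag is false while $P(\bz_i^{(k)})\neq\bm_i^*$, the two Flag-setting tests of Algorithm~\ref{alg:NCE} force the crossed endpoint $q=q(\bw_i^{(k-1)})$ to be \emph{continuous} with small overshoot $d_{i,1}<w_0 d_{i,0}$, and since $\bw_i^{(k-1)}\in\overline{\cR_{\bm_i^*}}$ this pins $\bw_i^{(k-1)}$ strictly inside $\cR_{\bm_i^*}$ at distance $>(1-w_0)d_{i,0}$ from $q$; (b) the extra step-size bound $s<\eps_0/(L_g(G+\sqrt dF_0))$ makes $\eps_0-L_g s(G+\sqrt dF_0)>0$, so together with Assumption~\ref{assumption:nonvanishing-gradient-continuous-endpoints} and $\ltwonorm{\nabla g(\bw^{(k-1)})}\le G$ (Lemma~\ref{lemma:bounded-gradient}) the proximal-gradient displacement $d_{i,0}=s\abth{[\nabla g(\bw^{(k-1)})]_i+v^-}$ has a uniform positive lower bound near continuous endpoints; and (c) on such a stall $\bx^{(k)}=\bx^{(k-1)}$, so $\bu^{(k)}=\bx^{(k)}+\frac{t_{k-1}}{t_k}(\bz^{(k)}-\bx^{(k)})$ with $t_{k-1}/t_k=1-\cO(1/k)\to 1$, which drives $\bu_i^{(k)}$ past $q$; the projection (\ref{eq:convex-piece-projection}) then either snaps $\bw_i^{(k)}$ onto $q$ — forcing $d_{i,0}=d_{i,1}$, hence the flag true and a piece change that is forbidden for $k>k_1$ (unless the test fails, the harmless case) — or, when $q$ is farther than $R_0$ from the stalled coordinate, confines $\bw_i^{(k)}$ within $R_0$ of it, a configuration I would handle by a separate short argument on the sign of $[\nabla g(\bw^{(k)})]_i+v^-$ that drives coordinate $i$ back onto $\bm_i^*$. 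Either way the offending coordinate resolves in boundedly many steps. The genuinely delicate bookkeeping, where most of the work would go, is running this over the finitely many coordinates simultaneously, ruling out overshoots $d_{i,1}$ that shrink as fast as $1-t_{k-1}/t_k$, and quantifying how large $\bar k$ must be.
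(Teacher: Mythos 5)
Your first step is exactly the paper's: Theorem~\ref{theorem:decrease-convex-pieces-change} plus monotonicity (Lemma~\ref{lemma:bounded-gradient}) and $\inf F>-\infty$ give a finite $k_1$ after which $P(\bx^{(k)})\equiv\bm^*$. For the second claim your case split is also the right one, and the only problematic configuration is the one you isolate: the acceptance test holds, some coordinate of $\bz^{(k)}$ crosses an endpoint, the NCE flag stays false, and $\bx^{(k)}=\bx^{(k-1)}$ stalls. Your ingredients (a)--(c) — flag false forces a \emph{continuous} crossed endpoint with $d_{i,1}<w_0d_{i,0}$; Assumption~\ref{assumption:nonvanishing-gradient-continuous-endpoints} with the extra step-size bound gives a positive lower bound on the displacement; the stall together with $t_{k-1}/t_k\to1$ pushes $\bu^{(k)}$ toward $\bz^{(k)}$ so the projection drives $\bw_i^{(k)}$ to the endpoint — are precisely the ingredients of the paper's proof. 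But the decisive statement, that this configuration can occur only finitely often, is never proved in your proposal: you explicitly defer it (``a separate short argument'', ``the genuinely delicate bookkeeping, where most of the work would go''). Since that exclusion \emph{is} the content of the lemma beyond eventual constancy of $P(\bx^{(k)})$, the proposal as written is an outline with a genuine gap, not a proof.

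For comparison, the paper closes the gap by contradiction along a subsequence, and that framing dissolves the difficulties you anticipate. Assume infinitely many bad indices $m_k\ge k_1$; each supplies a coordinate $i$ (fix one by pigeonhole) with a continuous crossed endpoint $q=q_{\bm_i^*}$, $d_{i,1}<w_0d_{i,0}$, and $\bx^{(m_k)}=\bx^{(m_k-1)}$. With $\tilde\bw$ equal to $\bw^{(m_k-1)}$ except $\tilde\bw_i=q$, Assumption~\ref{assumption:nonvanishing-gradient-continuous-endpoints}, $\ltwonorm{\nabla g(\bw^{(m_k-1)})}\le G$ and $s<\eps_0/(L_g(G+\sqrt d F_0))$ force $[\nabla g(\tilde\bw)]_i+\lim_{y\to q^-}f'(y)\le-\eps_0$, so the proximal displacement past $q$ is at least $s\pth{\eps_0-sL_g(G+\sqrt d F_0)}>0$ uniformly — the overshoot cannot shrink like $1-t_{k-1}/t_k$, which answers your main bookkeeping worry. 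Because of the stall, $\bu^{(m_k)}=\bx^{(m_k)}+\frac{t_{m_k-1}}{t_{m_k}}(\bz^{(m_k)}-\bx^{(m_k)})$ with $t_{m_k-1}/t_{m_k}\to1$, and nonexpansiveness of $\bP_{\bx^{(m_k)},R_0}$ gives $\bw_i^{(m_k)}\to q$; smoothness keeps $[\nabla g(\bw^{(m_k)})]_i+\lim_{y\to q^-}f'(y)<0$, so the next probe $\bz_i^{(m_k+1)}$ again lands in $\cR_{\bm_i^*}^+$ and $d_{i,1}\to d_{i,0}$. Hence at a single sufficiently large $m_k$ the test $d_{i,1}\ge w_0d_{i,0}$ holds, Algorithm~\ref{alg:NCE} sets the flag, and $P(\bx_i^{(m_k+1)})=\bm_i^*+1$, contradicting constancy for $k\ge k_1$. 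Note that only one large bad index is needed, so no simultaneous per-coordinate analysis and no quantitative estimate of $\bar k$ are required — the lemma asserts mere existence of a finite $\bar k$. Two loose ends you flagged are, for what it is worth, also left implicit in the paper: it simply asserts $[\bP_{\bx^{(m_k)},R_0}(\bz^{(m_k)})]_i=q_{\bm_i^*}$ (your $R_0$-capping subcase), and it tacitly assumes the acceptance test holds at iteration $m_k+1$ so that NCE is invoked; a complete write-up should address both, but the core piece missing from your proposal is the subsequence limit argument itself.
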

\begin{proof}[\textup{\textbf{
Proof of Lemma~\ref{lemma:fx-fz}}}]

According to Theorem~\ref{theorem:decrease-convex-pieces-change}, when $P(\bx^{(k+1)}) \neq P(\bx^{(k)})$, $F(\bx^{(k+1)}) \le F(\bx^{(k)}) - \kappa$. Because $\inf_{x \in \RR^d} F(x) > -\infty$, we can only have finite number of $k$'s such that $F(\bx^{(k+1)}) \le F(\bx^{(k)}) - \kappa$. As a result, there must exists a finite $k_1 \in \NN$ such that $P(\bx^{(k)}) = \bm^* \in \NN^d$ for all $k \ge k_1$.

We now prove that there exists a finite $k_2 > k_1$ such that
$F(\bx^{(k)})  \le F_{\bm^*}(\bz^{(k)})$ for all $k \ge k_2$. Suppose
this is not the case and there are infinitely many $k$'s such that
$F(\bx^{(k)}) > F_{\bm^*}(\bz^{(k)})$ and $k \ge k_1$. It follows that there exists a sequence $\set{m_k}_{k \ge 1}$ such that $F(\bx^{(m_k)}) >
F_{\bm^*}(\bz^{(m_k)})$  with $m_k > k_1$ for all $k \ge 1$, and $\lim\limits_{k \to \infty} m_k = \infty$. According to the PPGD algorithm and the Negative-Curvature-Exploitation algorithm described in Algorithm~\ref{alg:ppgd} and Algorithm~\ref{alg:NCE}, this is possible only if for all $k \ge 1$, there exists $i \in [d]$ such
that $P(\bz_i^{(m_k)}) \neq P(\bx_i^{(m_k-1)}) = \bm_i^*$,
$\abth{\bz_i^{(m_k)} -q(\bw_i^{(m_k-1)})} < w_0
\abth{\bz_i^{(m_k)} -\bw_i^{(m_k-1)}}$, $\bx^{(m_k)} = \bx^{(m_k-1)}$, and $f$ is continuous at $q(\bw_i^{(m_k-1)})$.

We consider the case that $q(\bw_i^{(m_k-1)}) = q_{\bm_i^*}$ is the right endpoint
of $\cR_{\bm_i^*}$. The case that $q(\bw_i^{(m_k-1)}) = q_{\bm_i^*-1}$ is the left endpoint of $\cR_{\bm_i^*}$ can be proved in a similar manner.

In this case, $P(\bz_i^{(m_k)}) = \bm_i^*+1$. Let $\tilde \bw \in \RR^d$ with $\tilde \bw_j = \bw_j^{(m_k-1)}$ for all $j \neq i$, and $\tilde \bw_i = q_{\bm_i^*}$. By the definition of $\tilde \bw$ and the same argument as (\ref{eq:Hamiltonian-descrease-continuous-point-seg5}) in the proof of Lemma~\ref{lemma:Hamiltonian-descrease-continuous-point}, we have
\bsal\label{eq:fx-fz-seg1}
\bz_i^{(m_k)} - \bw_i^{(m_k-1)} & =
-s \pth{[\nabla g(\bw^{(m_k-1)})]_i + v^- } \nonumber \\
&=s \pth{\bth{\nabla g(\bw^{(m_k-1)}) - \nabla g(\tilde \bw)}_i + \bth{\nabla g(\tilde \bw)}_i  + v^- }.
\esal

Because $\ltwonorm{\nabla g(\bw^{(m_k-1)}) - \nabla g(\tilde \bw)} \le sL_g (1-w_0) (G+F_0)$ and $s < \frac{\eps_0}{L_g (1-w_0) (G + F_0)}$,
 we must have $\bth{\nabla g(\tilde \bw)}_i  + v^- \le -\eps_0$ due to Assumption~\ref{assumption:nonvanishing-gradient-continuous-endpoints} and the fact that $\bz_i^{(m_k)} - \bw_i^{(m_k-1)} > 0$.

With $k \to \infty$, we have $\frac{t_{m_k-1}}{t_{m_k}} \to 1$ and
\bal\label{eq:fx-fz-seg2}
\bu^{(m_k)} &= \bx^{(m_k)} + \frac{t_{m_k-1}}{t_{m_k}}(\bz^{(m_k)} - \bx^{(m_k)}) + \frac{t_{m_k-1}-1}{t_{m_k}}(\bx^{(m_k)} - \bx^{(m_k-1)})  \nonumber \\
&=\bx^{(m_k)} + \frac{t_{m_k-1}}{t_{m_k}}(\bz^{(m_k)} - \bx^{(m_k)})
\overset{k \to \infty}{\to}  \bz^{(m_k)},
\eal

and it follows that $\bw^{(m_k)} = \bP_{\bx^{(m_k)},R_0}(\bu^{(m_k)})
\overset{k \to \infty}{\to} \bP_{\bx^{(m_k)},R_0}(\bz^{(m_k)})$, and we have
$\bth{\bP_{\bx^{(m_k)},R_0}(\bz^{(m_k)})}_i = \bth{\bP_{\bx^{(m_k-1)},R_0}(\bz^{(m_k)})}_i =q_{\bm_i^*} $.
By the updating rule (\ref{eq:zk-monotone}) in the PPGD algorithm and the first equality in (\ref{eq:fx-fz-seg1}), we have
\bsal\label{eq:fx-fz-seg3}
\ltwonorm{\bz^{(m_k)}-\bw^{(m_k-1)}}
\le s (G+\sqrt{d} F_0).
\esal

It follows from (\ref{eq:fx-fz-seg3}) that
\bsal\label{eq:fx-fz-seg4}
\ltwonorm{\nabla g\pth{\bP_{\bx^{(m_k)},R_0}(\bz^{(m_k)})} - \nabla g(\tilde \bw)}
&\le L_g \ltwonorm{\bP_{\bx^{(m_k)},R_0}(\bz^{(m_k)})-\tilde \bw} \nonumber \\
&\stackrel{\circled{1}}{\le} L_g \ltwonorm{\bP_{\bx^{(m_k)},R_0}(\bz^{(m_k)}) -
\bP_{\bx^{(m_k)},R_0}(\bw^{(m_k-1)})} \nonumber \\
&\stackrel{\circled{2}}{\le} L_g \ltwonorm{\bz^{(m_k)}-\bw^{(m_k-1)}} \nonumber \\
&\le s L_g (G+\sqrt{d} F_0).
\esal
Here $\circled{1}$ follows from $\bth{\bP_{\bx^{(m_k)},R_0}(\bz^{(m_k)})}_i = \tilde \bw_i = q_{\bm_i^*}$,
$\bw^{(m_k-1)} = \bP_{\bx^{(m_k)},R_0}(\bw^{(m_k-1)})$,
and $\bx^{(m_k)} = \bx^{(m_k-1)}$. $\circled{2}$ follows from the contraction property of projection onto a closed convex set.

Combining (\ref{eq:fx-fz-seg4}) and the fact that $\bth{\nabla g(\tilde \bw)}_i  + v^- \le -\eps_0$, we have
\bsal\label{eq:fx-fz-seg5}
\bth{\nabla g\pth{\bP_{\bx^{(m_k)},R_0}(\bz^{(m_k)})}}_i  + v^-
&= \bth{\nabla g(\tilde \bw)}_i  + v^- + \pth{\bth{\nabla g\pth{\bP_{\bx^{(m_k)},R_0}(\bz^{(m_k)})}}_i - \bth{\nabla g(\tilde \bw)}_i} \nonumber \\
&\le -\eps_0 + s L_g (G+\sqrt{d} F_0) < 0
\esal

due to $s < \frac{\eps_0}{L_g (G+\sqrt{d}F_0)}$.

Now noting that $\bw^{(m_k)} = \bP_{\bx^{(m_k)},R_0}(\bu^{(m_k)})
\overset{k \to \infty}{\to} \bP_{\bx^{(m_k)},R_0}(\bz^{(m_k)})$, it follows from (\ref{eq:fx-fz-seg5}) and the smoothness of $\nabla g$ that there exists a large enough $k'$ such that when $k \ge k'$,
\bsal\label{eq:fx-fz-seg6}
\bth{\nabla g(\bw^{(m_k)})}_i  + v^- < 0.
\esal

We now analyze the next iterate $\bz_i^{(m_k+1)}$. By the updating rule $\bz_i^{(m_k+1)} = \prox_{s f_{P(\bx_i^{(m_k)})}}
\pth{\bth{\bw^{(m_k)} - s \nabla g(\bw^{(m_k)})}_i} $, we must have
\bsal
\bz_i^{(m_k+1)} \in \cR_{\bm_i^*}^+, \quad \bth{\bP_{\bx^{(m_k)},R_0}(\bz^{(m_k)})}_i = q_{\bm_i^*} = q(\bw^{(m_k)}_i).
\esal

In the iteration $m_k+1$, we have
\bsal\label{eq:fx-fz-seg7}
d_{i,0} = \abth{\bz_i^{(m_k+1)} - \bw_i^{(m_k)}},
d_{i,1} \defeq \abth{\bz_i^{(m_k+1)} - q(\bw_i^{(m_k)})},
\esal
and $\bw_i^{(m_k)} \overset{k \to \infty}{\to} q(\bw_i^{(m_k)})$. Therefore,  with sufficiently large $k'$, we have $d_{i,1} \ge \ w_0 d_{i,0}$ due to $d_{i,1} \overset{k \to \infty}{\to} d_{i,0}$.
It follows from the Negative-Curvature-Exploitation algorithm described in Algorithm~\ref{alg:NCE} that $P(\bx_i^{(m_k+1)}) = \bm_i^*+1$, which contradict the fact that $P(\bx^{(m_k)}) = \bm^* \in \NN^d$ for all $k \ge k_1$. This contradiction shows that there exists a finite $k_2 > k_1$ such that
$F(\bx^{(k)})  \le F_{\bm^*}(\bz^{(k)})$ for all $k \ge k_2$. Setting $\bar k = k_2$ completes the proof.

\end{proof}

\begin{proof}[\textup{\textbf{
Proof of Theorem~\ref{theorem:ppgd-convergence}}}]

According to Lemma~\ref{lemma:fx-fz}, there exists a finite $k_1 > 1$ such that $P(\bx^{(k)}) = \bm^* \in \NN^d$ and $F(\bx^{(k)})  \le F_{\bm^*}(\bz^{(k)})$ for all $k \ge k_1$. Furthermore, the proof of Lemma~\ref{lemma:bounded-gradient} shows that the sequence $\set{\bx^{(k)}}_{k \ge 1}$ generated by PPGD satisfies $\set{\bx^{(k)}}_{k \ge 1} \subseteq \cL$ which is a compact set, so there exists at least one limit point for $\set{\bx^{(k)}}$, and
$\Omega \neq \emptyset$.

Define $h_{\bm}(\bx) \defeq \sum_{i=1}^d f_{\bm_i}(\bx_i)$ for $\bm \in \NN^d$ and $\bm_i \in [M]$ for all $i \in [d]$, and $F_{\bm} \defeq g + h_{\bm}$.

Note that for all $i \in [d]$, $f_{\bm^*_i}$ is convex except for the third case in (\ref{eq:surrogate-right-endpoint}) or (\ref{eq:surrogate-left-endpoint}). In such a case, either event $\textcircled{1}$: $f_{\bm^*_i}(x) = \lim_{y \to q_{\bm^*_i}^+} f(y)$ for $x > q_{\bm^*_i}$ and $f(q_{\bm^*_i}) < \lim_{y \to q_{\bm^*_i}^+} f(y)$, or event $\textcircled{2}$: $f_{\bm^*_i}(x) = \lim_{y \to q_{\bm^*_i-1}^-} f(y)$ for $x < q_{\bm^*_i-1}$ and $f(q_{{\bm^*_i}-1}) < \lim_{y \to q_{{\bm^*_i}-1}^-} f(y)$. It follows by the proof of Lemma~\ref{lemma:Hamiltonian-descrease-jump-point-up}
that all the sequences $\set{\bx_i^{(k)}}_{k \ge k_1}$ and $\set{\bz_i^{(k)}}_{k \ge k_1+1}$ satisfies $\set{\bx_i^{(k)}}_{k \ge k_1} \subseteq (-\infty, q_{\bm^*_i}]$ and $\set{\bz_i^{(k)}}_{k \ge k_1+1} \subseteq (-\infty, q_{\bm^*_i}]$  if event $\textcircled{1}$ happens, and $\set{\bx_i^{(k)}}_{k \ge k_1} \subseteq [q_{\bm^*_i-1}, +\infty)$ and $\set{\bz_i^{(k)}}_{k \ge k_1+1} \subseteq [q_{\bm^*_i-1}, +\infty)$  if event $\textcircled{2}$ happens. For all $i \in [d]$, define $\cR^*_i$ as the region over which $f_{\bm_i^*}$ is convex. It is clear that $\cR^*_i = \RR$ if event $\textcircled{1}$ and $\textcircled{2}$ do not happen for $f_{\bm_i^*}$. If only event $\textcircled{1}$ happens for $f_{\bm_i^*}$, then $\cR^*_i = (-\infty, q_{\bm^*_i}]$. If only event
 $\textcircled{2}$ happens, $\cR^*_i = [q_{\bm^*_i-1}, +\infty)$. If both event $\textcircled{1}$ and $\textcircled{2}$ happen, then  $\cR^*_i = [q_{\bm^*_i-1}, q_{\bm^*_i}]$.

Let $\bar \bx \in \RR^d$ be an optimal solution to
\bsals
\min_{\bx_i \in \cR^*_i, i \in [d]} F_{\bm^*}(\bx).
\esals
The existence of $\bar \bx$ is proved as follows. First, it can be verified that the convex surrogate function $F_{\bm^*}$ is continuous over the convex region $\cR^*$, and $\mathcal R^*$ is a closed set in the usual Euclidean topology. By the coercivity assumption and the continuity of $F_{\bm^*}$, the set $\cR_0 \coloneqq \set{ \bx \mid F_{\bm^*}(\bx) \le  F_{\bm^*}(\bx^{(0)}), \bx \in \cR^* }$ ($\bx^{(0)}$ is the initialization point of PPGD) is bounded and closed, so the set $\cR_0 \cap \cR^*$ is bounded and closed thus a compact set. Therefore, the minimizer $\bar \bx$ is by its definition a minimizer of $F_{\bm^*}$ over $\cR^*$, which is also a minimizer of a continuous function $F_{\bm^*}$ over the compact set $\cR_0 \cap \cR^*$. The existence $\bar{\bx}$ follows by the existence of a minimizer of a continuous function over a compact set.


\textbf{Roadmap of the proof.} We prove this theorem in three steps. In step $1$, it is proved that there exists a finite $k_0 \ge k_1$ such that for all $k > k_0$,

\bsals
F(\bx^{(k)})-F_{\bm^*}({\bar \bx}) \le \cO(\frac{1}{k^2}).
\esals

Noting that $F(\bx^{(k)}) = F_{\bm^*}(\bx^{(k)}) \ge F_{\bm^*}({\bar \bx})$ by the optimality of $\bar \bx$, the above inequality combined with the monotone nonincreasing of $\set{F(\bx^{(k)})}$ indicate that
$F(\bx^{(k)}) \downarrow F_{\bm^*}({\bar \bx})$.

In step $2$, we will prove that $F(\bx') = F_{\bm^*}({\bar \bx})$ for any limit point $\bx' \in \Omega$. According to the definition of $F_{\bm^*}$ and the optimality of $\bar \bx$, it follows that $F(\bx')$ is a local minimum of $F$ and a global minimum of $F_{\bm^*}$ over $\cR^*$.

In step $3$, we will prove that any limit point $\bx' \in \Omega$ is a critical point of $F$ under a mild condition, following the argument in step $2$.

\textbf{Step 1.} We now consider $k \ge k_1$ in the sequel. We have
\bsals
f_{\bm^*_i}(\bz_i^{(k+1)}) \le f_{\bm^*_i}(v) + p (\bz_i^{(k+1)} - v)
\esals
for all $i \in [d]$, $v \in \cR_i$, and all $p \in \tpartial f(\bz_i^{(k+1)})$. It follows that if $\bv \in \RR^d$ and $\bv_i \in \cR^*_i$ for all $i \in [d]$, then
\bsal\label{eq:ppgd-convergence-convex-h-surrogate}
h_{\bm^*}(\bz^{(k+1)}) \le h_{\bm^*}(\bv) + \bp (\bz^{(k+1)} - \bv)
\esal
for all $\bp \in \tpartial h_{\bm^*}$.

%

Because $\bz_i^{(k+1)} = \prox_{s f_{P(\bx_i^{(k)})}} \pth{\bth{\bw^{(k)} - s \nabla g(\bw^{(k)}}_i}$ in (\ref{eq:zk-monotone}) of Algorithm~\ref{alg:ppgd}, it follows by the optimality of $\bz_i^{(k+1)}$ that
\bsal\label{eq:ppgd-convergence-partial-h-at-z}
\frac{1}{s}(\bw^{(k)} - \bz^{(k+1)}) - \nabla g(\bw^{k}) \in \tpartial h_{\bm^*}(\bz^{(k+1)}).
\esal

It follows by (\ref{eq:ppgd-convergence-convex-h-surrogate}) and
(\ref{eq:ppgd-convergence-partial-h-at-z}) that
\bsal\label{eq:ppgd-convergence-convex-h-surrogate-1}
h_{\bm^*}(\bz^{(k+1)}) \le h_{\bm^*}(\bv) +  \pth{\frac{1}{s}(\bw^{(k)} - \bz^{(k+1)}) - \nabla g(\bw^{k})}(\bz^{(k+1)} - \bv)
\esal

for any $\bv \in \RR^d$ such that $\bv_i \in \cR^*_i$ for all $i \in [d]$. For such $\bv$, we have
\bsal\label{eq:ppgd-convergence-seg1-pre}
F_{\bm^*}(\bz^{(k+1)}) &\le g(\bv) + \langle \nabla g(\bw^{(k)}), \bz^{(k+1)} - \bv \rangle + \frac{L_g}{2} \|\bz^{(k+1)} - \bw^{(k)}\|_2^2 + h_{\bm^*}(\bz^{(k+1)}) \nonumber \\
&\stackrel{\circled{1}}{\le} g(\bv) + \langle \nabla g(\bw^{(k)}), \bz^{(k+1)} - \bv \rangle
+ \frac{L_g}{2}\ltwonorm{\bz^{(k+1)} - \bw^{(k)}}^2 + h_{\bm^*}(\bv) \nonumber \\
&\phantom{=}+ \langle \nabla g(\bw^{(k)}) + \frac{1}{s}(\bz^{(k+1)} - \bw^{(k)}), \bv - \bz^{(k+1)} \rangle \nonumber \\
&= F_{\bm^*}(\bv) + \frac{1}{s} \langle \bz^{(k+1)} - \bw^{(k)}, \bv - \bz^{(k+1)} \rangle +  \frac{L_g}{2} \ltwonorm{\bz^{(k+1)} - \bw^{(k)}}^2 \nonumber \\
&\le F_{\bm^*}(\bv) + \frac{1}{s} \langle \bz^{(k+1)} - \bw^{(k)}, \bv - \bw^{(k)} \rangle - \frac{1}{s} \|\bz^{(k+1)} - \bw^{(k)}\|_2^2 +  \frac{L_g}{2} \ltwonorm{\bz^{(k+1)} - \bw^{(k)}}^2 \nonumber \\
&= F_{\bm^*}(\bv) + \frac{1}{s} \langle \bz^{(k+1)} - \bw^{(k)}, \bv - \bw^{(k)} \rangle - \big( \frac{1}{s} - \frac{L_g}{2} \big)
\ltwonorm{\bz^{(k+1)} - \bw^{(k)}}^2.
\esal%
Here $\circled{1}$ follows from (\ref{eq:ppgd-convergence-convex-h-surrogate-1}).

Let $\bv = \bx^{(k)}$ and $\bv = {\bar \bx}$ in (\ref{eq:ppgd-convergence-seg1-pre}), we have
\bsal\label{eq:ppgd-convergence-seg1}
&F_{\bm^*}(\bz^{(k+1)}) \le F_{\bm^*}(\bx^{(k)}) + \frac{1}{s} \langle \bz^{(k+1)} - \bw^{(k)}, \bx^{(k)} - \bw^{(k)} \rangle - \pth{ \frac{1}{s} - \frac{L_g}{2} }
\ltwonorm{\bz^{(k+1)} - \bw^{(k)}}^2,
\esal
and
\bsal\label{eq:ppgd-convergence-seg2}
&F_{\bm^*}(\bz^{(k+1)}) \le F_{\bm^*}({\bar \bx}) + \frac{1}{s} \langle \bz^{(k+1)} - \bw^{(k)}, {\bar \bx} - \bw^{(k)} \rangle - \pth{ \frac{1}{s} - \frac{L_g}{2} }
\ltwonorm{\bz^{(k+1)} - \bw^{(k)}}^2.
\esal

(\ref{eq:ppgd-convergence-seg1})$\times (t_k-1)+$ (\ref{eq:ppgd-convergence-seg2}), we have
\bsal\label{eq:ppgd-convergence-seg3}
 &t_k F_{\bm^*}(\bz^{(k+1)}) - (t_k-1)F_{\bm^*}(\bx^{(k)}) - F_{\bm^*}({\bar \bx})  \nonumber \\
 &\le \frac{1}{s} \langle \bz^{(k+1)} - \bw^{(k)}, (t_k-1)(\bx^{(k)} - \bw^{(k)})+{\bar \bx} - \bw^{(k)} - t_k \pth{ \frac{1}{s} - \frac{L_g}{2} }
\ltwonorm{\bz^{(k+1)} - \bw^{(k)}}^2.
\esal

It follows that
\bsal\label{eq:ppgd-convergence-seg4}
&t_k \big( F_{\bm^*}(\bz^{(k+1)})-F_{\bm^*}({\bar \bx}) \big) - (t_k-1) \big( F_{\bm^*}(\bx^{(k)})-F_{\bm^*}({\bar \bx}) \big) \nonumber \\
&\le \frac{1}{s} \langle \bz^{(k+1)} - \bw^{(k)}, (t_k-1)(\bx^{(k)} - \bw^{(k)})+{\bar \bx} - \bw^{(k)} \rangle - t_k \pth{ \frac{1}{s} - \frac{L_g}{2} }
\ltwonorm{\bz^{(k+1)} - \bw^{(k)}}^2.
\esal%

Multiplying both sides of (\ref{eq:ppgd-convergence-seg4}) by $t_k$, since $t_k^2 - t_k = t_{k-1}^2$, we have
\bsal\label{eq:ppgd-convergence-seg5}
 &t_k^2 \pth{ F_{\bm^*}(\bz^{(k+1)})-F_{\bm^*}({\bar \bx}) } - t_{k-1}^2 \pth{ F_{\bm^*}(\bx^{(k)})-F_{\bm^*}({\bar \bx}) } \nonumber \\
&\le \frac{1}{s} \langle t_k(\bz^{(k+1)} - \bw^{(k)}), (t_k-1)(\bx^{(k)} - \bw^{(k)})+{\bar \bx} - \bw^{(k)} \rangle - \big( \frac{1}{s} - \frac{L_g}{2} \big) \|t_k(\bz^{(k+1)} - \bw^{(k)})\|_2^2 \nonumber \\
&\le \frac{1}{s} \langle t_k(\bz^{(k+1)} - \bw^{(k)}), (t_k-1)(\bx^{(k)} - \bw^{(k)})+{\bar \bx} - \bw^{(k)} \rangle - \frac{1}{2s} \ltwonorm{t_k(\bz^{(k+1)} - \bw^{(k)})}^2 \nonumber \\
& = \frac{1}{2s} \pth{ \ltwonorm{(t_k-1)\bx^{(k)}-t_k \bw^{(k)} + {\bar \bx}}^2 - \ltwonorm{(t_k-1)\bx^{(k)} - t_k \bz^{(k+1)} + {\bar \bx}}^2}.
\esal


Since $t_k \ge \frac{k+1}{2}$ for $k \ge 1$, we have $t_k \overset{k \to \infty}{\to} = \infty$ and $\lim_{k \to \infty} \pth{1-\frac{1}{t_k}}\bx^{(k)}+ \frac{1}{t_k} {\bar \bx} =\bx^{(k)}$. It follows that there exists a finite $k_2$ such that
\bsal\label{eq:ppgd-convergence-convex-projection-pre1}
\bth{\pth{1-\frac{1}{t_k}}\bx^{(k)}+ \frac{1}{t_k} {\bar \bx}}_i \in \overline {\cR_{P(\bx^{(k)}_i)}} \cap B(\bx^{(k)}_i,R_0)
\esal

for all $k \ge k_2$ and all $i \in [d]$. It follows from
(\ref{eq:ppgd-convergence-convex-projection-pre1}) that
\bsal\label{eq:ppgd-convergence-convex-projection-pre2}
\pth{1-\frac{1}{t_k}}\bx^{(k)}+ \frac{1}{t_k} {\bar \bx}
= \bP_{\bx^{(k)},R_0}\pth{\pth{1-\frac{1}{t_k}}\bx^{(k)}+ \frac{1}{t_k} {\bar \bx}}.
\esal

Now let $k \ge k_0 \defeq \max\set{k_1,k_2}$, we have
\bsal\label{eq:ppgd-convergence-convex-projection-1}
\ltwonorm{(t_k-1)\bx^{(k)}-t_k \bw^{(k)} + {\bar \bx}} &= t_k \ltwonorm{\pth{1-\frac{1}{t_k}}\bx^{(k)}+ \frac{1}{t_k} {\bar \bx} - \bw^{(k)} } \nonumber \\
&\stackrel{\circled{1}}{=} t_k \ltwonorm{\bP_{\bx^{(k)},R_0}\pth{\pth{1-\frac{1}{t_k}}\bx^{(k)}+ \frac{1}{t_k} {\bar \bx} }- \bP_{\bx^{(k)},R_0}(\bu^{(k)})  } \nonumber \\
&\stackrel{\circled{2}}{\le} t_k \ltwonorm{\pth{1-\frac{1}{t_k}}\bx^{(k)}+ \frac{1}{t_k} {\bar \bx}-\bu^{(k)}}\nonumber \\
&\le \ltwonorm{(t_k-1)\bx^{(k)} - t_k \bu^{(k)}+ {\bar \bx}},
\esal

where $\circled{1}$ follows from (\ref{eq:ppgd-convergence-convex-projection-pre2}),
$\circled{2}$ follows from the contraction property of projection onto a closed convex set.

It follows by (\ref{eq:ppgd-convergence-seg5}) and (\ref{eq:ppgd-convergence-convex-projection-1}), that
\bsal\label{eq:ppgd-convergence-seg6}
&t_k^2 \big( F_{\bm^*}(\bz^{(k+1)})-F_{\bm^*}({\bar \bx}) \big) - t_{k-1}^2 \pth{ F_{\bm^*}(\bx^{(k)})-F_{\bm^*}({\bar \bx}) } \nonumber \\
&\le \frac{1}{2s} \pth{ \ltwonorm{(t_k-1)\bx^{(k)}-t_k \bu^{(k)} + {\bar \bx}}^2 - \ltwonorm{(t_k-1)\bx^{(k)} - t_k \bz^{(k+1)} + {\bar \bx}}^2}.
\esal%

Define $\bQ^{(k+1)} = (t_k-1)\bx^{(k)} - t_k \bz^{(k+1)} + {\bar \bx}$, then $\bQ^{(k)} = (t_{k-1}-1)\bx^{(k-1)} - t_{k-1} \bz^{(k)} + {\bar \bx}$. It can be verified that $\bQ^{(k)} = (t_k-1)\bx^{(k)}-t_k \bu^{(k)} + {\bar \bx}$. Therefore,
\bsals
&t_k^2 \big( F_{\bm^*}(\bz^{(k+1)})-F_{\bm^*}({\bar \bx}) \big) - t_{k-1}^2 \big( F_{\bm^*}(\bx^{(k)})-F_{\bm^*}({\bar \bx}) \big) \le \frac{1}{2s} \pth{ \ltwonorm{\bQ^{(k)}\|_2^2 - \|\bQ^{(k+1)}}^2 }.
\esals

Noting that $F(\bx^{(k+1)}) \le F_{\bm^*}(\bz^{(k+1)})$ for $k \ge k_0$, it follows from the above inequality that
\bsal\label{eq:ppgd-convergence-seg7}
&t_k^2 \big( F_{\bm^*}(\bx^{(k+1)})-F_{\bm^*}({\bar \bx}) \big) - t_{k-1}^2 \big( F_{\bm^*}(\bx^{(k)})-F_{\bm^*}({\bar \bx}) \big) \le \frac{1}{2s} \pth{ \ltwonorm{\bQ^{(k)}\|_2^2 - \|\bQ^{(k+1)}}^2 }.
\esal

Summing (\ref{eq:ppgd-convergence-seg7}) over $k=k_0,\ldots,m$ for $m \ge k_0$, we have
\bsal\label{eq:ppgd-convergence-seg8}
&t_m^2 \big( F_{\bm^*}(\bx^{(m+1)})-F_{\bm^*}({\bar \bx}) \big) - t_{k_0-1}^2 \big( F_{\bm^*}(\bx^{(k_0)})-F_{\bm^*}({\bar \bx}) \big) \nonumber \\
&\le \frac{1}{2s} \big( \|\bQ^{(k_0)}\|_2^2 - \|\bQ^{(m+1)}\|_2^2 \big) \le \frac{1}{2s} \ltwonorm{\bQ^{(k_0)}}^2 = \frac{1}{2s} \ltwonorm{(t_{k_0-1}-1)\bx^{(k_0-1)} - t_{k_0-1} \bz^{(k_0)} + {\bar \bx}}^2.
\esal

Since $t_k \ge \frac{k+1}{2}$ for $k \ge 1$, it follows from (\ref{eq:ppgd-convergence-seg8}) that
\bsal\label{eq:ppgd-convergence-seg9}
 F_{\bm^*}(\bx^{(m+1)})-F_{\bm^*}({\bar \bx}) &\le \frac{4}{(m+1)^2}
 \bigg( \frac{1}{2s}\ltwonorm{(t_{k_0-1}-1)\bx^{(k_0-1)} - t_{k_0-1} \bz^{(k_0)} + {\bar \bx}}^2 \nonumber \\
&\phantom{\le}+ t_{k_0-1}^2 \pth{ F_{\bm^*}(\bx^{(k_0)})-F_{\bm^*}({\bar \bx}) } \bigg) \nonumber \\
&\triangleq \frac{4}{(m+1)^2} U^{(k_0)}.
\esal

Noting that  $F(\bx^{(m+1)}) =  F_{\bm^*}(\bx^{(m+1)})$, we have $F(\bx^{(m+1)})-F_{\bm^*}({\bar \bx}) \le  \frac{4}{(m+1)^2} U^{(k_0)}$. Replacing $m+1$ with $k$, we have
\bsal\label{eq:ppgd-convergence-seg10}
F(\bx^{(k)})-F_{\bm^*}({\bar \bx}) \le  \frac{4}{k^2} U^{(k_0)}
\esal

for all $k > k_0$. Because $F(\bx^{(k)}) = F_{\bm^*}(\bx^{(k)}) \ge F_{\bm^*}({\bar \bx})$ due to the optimality of $\bar \bx$, we have $F(\bx^{(k)}) \downarrow F_{\bm^*}({\bar \bx})$ as $k \to \infty$ based on (\ref{eq:ppgd-convergence-seg10}).

\newpage

\textbf{Step 2.} We now prove that any limit point $\bx' \in \Omega$ achieves a local minimum of $F$. Let $\bx' \in \Omega$ be an arbitrary limit point of $\set{\bx^{(k)}}_{k \ge 1}$. By Lemma~\ref{lemma:bounded-gradient}, we have $F(\bx^{(k)}) \downarrow F(\bx')$ as $k \to \infty$. To see this, we first note that $F_{\bm^*}$ is continuous over the set
$\cR^*$ by the definition of $\cR^*$ in the beginning of this proof. It follows by the beginning of this proof that
$\set{\bx_i^{(k)}}_{k \ge k_0} \subseteq \cR^*_i$ for all $k \ge k_0$ and all $i \in [d]$. In addition, $\bx'_i \in \cR_{\bm_i^*} \subseteq \cR^*_i$ for all $i \in [d]$. Therefore, $\set{\bx^{(k)}}_{k \ge k_0}$ and $\bx'$ belong to $\cR^*$ on which $F_{\bm^*}$ is continuous, so $F(\bx^{(k)}) = F_{\bm^*}(\bx^{(k)})\downarrow F_{\bm^*}(\bx') = F(\bx')$.

We also have $F(\bx^{(k)}) \downarrow F_{\bm^*}({\bar \bx})$ as $k \to \infty$ due to step $1$. As a result,
\bsal\label{eq:ppgd-convergence-seg11}
F(\bx')=F_{\bm^*}({\bar \bx})
\esal

for any $\bx' \in \Omega$. That is, $F$ has constant value on $\Omega$.

It is noted that $F_{\bm^*} = F$ on the set $\set{\bx \in \RR^d \longmid
P(\bx) = \bm^*} \subseteq \cR^* $, so the optimality of $\bar \bx$ indicates that
\bsal\label{eq:ppgd-convergence-seg12}
F(\bx') = F_{\bm^*}({\bar \bx}) \le \inf\limits_{\set{\bx \in \RR^d \longmid
P(\bx) = \bm^*}} F(\bx).
\esal

On the other hand, since $P(\bar \bx) = \bm^*$, we have
$F(\bx') \ge \inf\limits_{\set{\bx \in \RR^d \longmid
P(\bx) = \bm^*}} F(\bx)$.
Combining this inequality and (\ref{eq:ppgd-convergence-seg12}), we have
$F(\bx') = \inf\limits_{\set{\bx \in \RR^d \longmid
P(\bx) = \bm^*}} F(\bx)$.

\textbf{Step 3.} We now prove that any limit point $\bx' \in \Omega$ is a critical point of $F$ under a mild condition that $f_{\bm_i^*}$ does not take the third case in (\ref{eq:surrogate-right-endpoint}) or (\ref{eq:surrogate-left-endpoint}) for all $i \in [d]$. As explained in the beginning of this proof, under this condition, $\cR^*_i = \RR$ for all $i \in [d]$. Because $F(\bx')=F_{\bm^*}({\bar \bx})$ for any $\bx' \in \Omega$, $\bx'$ is an optimal solution to
\bsals
\min_{\bx \in \RR^d} F_{\bm^*}(\bx),
\esals

and $F_{\bm^*}$ is convex over $\RR^d$. The optimality of $\bx'$ for this convex programming problem indicates that $\bzero \in \tpartial F_{\bm^*}(\bx')$.
Because $F_{\bm^*} = F$ on the set $\set{\bx \in \RR^d \longmid
P(\bx) = \bm^*}$ and $\bx' \in \set{\bx \in \RR^d \longmid
P(\bx) = \bm^*}$, we have
$\bzero \in \partial F(\bx')$. This can be verified using the definition of limiting subdifferential by considering a constant sequence $\bx^k = \bx'$ for all $k \ge 1$. We have with $0 \in \tpartial F(\bx^k)$ because $\bzero \in \tpartial F_{\bm^*}(\bx')$ and $P(\bx') = \bm^*$.

\end{proof}

\end{document}